\documentclass[12pt,reqno]{article}
\usepackage[letterpaper,margin=1.6cm]{geometry}
\usepackage{latexsym, amsfonts, amsthm,amssymb,amscd,amsmath,makeidx}
\usepackage{enumerate}
\usepackage[normalem]{ulem}
\usepackage{exscale}
\usepackage{overpic} 
\usepackage{color} 
\usepackage{graphicx}
\usepackage{bm}
\usepackage{comment}
\usepackage{caption}
\usepackage{float}
\usepackage{array} 
\usepackage{booktabs} 
\usepackage{tabularx}
\usepackage{cancel}
\usepackage{mathrsfs}
\usepackage{amssymb}
\usepackage{mathtools}
\DeclarePairedDelimiter\ceil{\lceil}{\rceil}
\DeclarePairedDelimiter\floor{\lfloor}{\rfloor}

\usepackage{tikz}
\usepackage{blindtext}
\usepackage{bbm}
\usepackage{fancyhdr} 
\usepackage{mathrsfs}
\usepackage{wrapfig}
\usepackage{hyperref}
\usepackage{bookmark}
\usepackage{cleveref}
\usepackage{cases}
\usepackage{multirow}
\usepackage{pdflscape}
\usepackage[title]{appendix}

\hypersetup{ colorlinks, citecolor=red, filecolor=black, linkcolor=blue,
	urlcolor=black }

\definecolor{DarkGreen}{rgb}{0.2,0.6,0.2}

\def\eps{\varepsilon} \def\Om{\Omega}\def\om{\omega}

\def\Ind#1{{\mathbbmss 1}_{_{\scriptstyle #1}}}

\newcommand{\norm}[1]{\left\lVert {#1}\right\rVert}
\newcolumntype{Y}{>{\centering\arraybackslash}X} 
\def\lra{\longrightarrow}  
 \def\ua{\uparrow} \def\da{\downarrow}

\def\wh{\widehat} \def\wt{\widetilde}

\newcolumntype{C}{>{\centering\arraybackslash}X}

 \def\ignore#1{}

\def\bR{{\mathbb R}}

 \def\bZ{\mathbb Z} \def\bN{\mathbb N} \def\bQ{\mathbb Q}
\def\bT{\mathbb T}   
\def\bP{{\mathbb P}} \def\bE{{\mathbb E}}

\numberwithin{equation}{section}

\usetikzlibrary{positioning,calc,cd}

\allowdisplaybreaks[4]

\def\cB{{\mathscr B}}

 \def\cF{{\mathscr F}} \def\cG{{\mathscr G}}

\newtheorem{theorem}{Theorem}[section]
\newtheorem{proposition}[theorem]{Proposition}
\newtheorem{lemma}[theorem]{Lemma}
\newtheorem{corollary}[theorem]{Corollary}

\theoremstyle{definition}
\newtheorem{definition}[theorem]{Definition}
\newtheorem{example}[theorem]{Example}

\newtheorem{remark}[theorem]{Remark}

\def\Ind#1{{\mathbbmss 1}_{_{\scriptstyle #1}}} \def\lra{\longrightarrow}
\def\eps{\varepsilon} \def\<{\langle} \def\>{\rangle} \def\wt#1{\widetilde{#1}}

%\setcitestyle{authoryear,open={((},close={))}}
 \def\lra{\longrightarrow} 
  \def\ua{\uparrow} \def\da{\downarrow}

\def\wh{\widehat} \def\wt{\widetilde}  
\def\trace#1{\text{trace}\, #1}
 
  \def\argmin{\mathop{\hbox{\rm arg\,min}}}

\begin{document}
	
	\title{The roughness exponent and its model-free estimation}
	\author{Xiyue Han$^*$ and Alexander Schied\thanks{ University of Waterloo,
 200 University Ave W, Waterloo, Ontario, N2L 3G1, Canada. E-Mails: {\tt
 xiyue.han@uwaterloo.ca, aschied@uwaterloo.ca}.\hfill\break
The authors gratefully acknowledge support from the Natural Sciences and
 Engineering Research Council of Canada through grant
 RGPIN-2017-04054. }} \date{  \normalsize  First version: November 19, 2021\\ \normalsize This version: June 21, 2024 }
		
	\maketitle
	\begin{abstract}
	 Motivated by pathwise stochastic calculus, we say that a  continuous
real-valued function $x$ admits the roughness  exponent  $R$ if  the
$p^{\text{th}}$ variation of $x$ converges to zero for $p>1/R$ and to infinity for
$p<1/R$. 
 In our main result, we provide a mild condition on the
Faber--Schauder coefficients of  $x$ under which the roughness exponent
exists and is given as the limit of the classical Gladyshev estimates $\wh
R_n(x)$. This result can be  viewed as a strong consistency result for the
Gladyshev estimators in an entirely model-free setting, because it works strictly trajectory-wise and requires no probabilistic assumptions. Nonetheless,
our proof is probabilistic and relies on a martingale  hidden in the
Faber--Schauder expansion of $x$. We show that the condition of our main result is satisfied for the typical sample paths of fractional Brownian motion with drift, and we provide almost-sure convergence rates for the corresponding Gladyshev estimates. We also discuss the connections between the roughness exponent and 
the related concepts of Besov regularity and weighted quadratic variation.  Since the Gladyshev estimators are not
scale-invariant, we construct several scale-invariant estimators. 
Finally, we extend our results to the case in which  the $p^{\text{th}}$
variation of $x$ is defined over a sequence of unequally spaced partitions. 
	\end{abstract}

\smallskip
\noindent \textbf{MSC2020 subject classifications:} 60F15, 60G22, 60G46, 62G05, 26A30

\smallskip
\noindent \textbf{Keywords.}  Roughness exponent, power variation, Gladyshev estimator, Faber--Schauder system, fractional Brownian motion with drift, Besov regularity, weighted quadratic variation

	\section{Introduction}

The Hurst parameter was originally defined by Hurst~\cite{Hurst} as a
measure of the autocorrelation of a time series. But it is well known that
it can also determine a degree of \lq roughness\rq\ of the trajectories of certain stochastic processes, such as fractional Brownian motion. However,
Gneiting and Schlather~\cite{Gneiting2004Hurst} constructed a class of
stationary Gaussian processes for which the  Hurst parameter and its roughness decouple completely, if the roughness is quantified in terms of fractal
dimension; see also Cheridito \cite{CheriditoDiss} and Bennedsen et al.~\cite{BennedsenLundePakkanen} for related work. 	It is therefore necessary to distinguish
between the classical, autocorrelation-based Hurst parameter and a suitable
index for the roughness of a trajectory.  	In this paper, we study such a
roughness index, which is based on the 	$p^{\text{th}}$ variation of a
continuous real-valued function. 
	
	Recall that, if $x:[0,1]\to\bR$ is a continuous function and $p>0$, the
	$p^{\text{th}}$ variation of $x$ along the sequence of dyadic partitions is
	defined as the limit of 
	$$
		\<x\>_n^{(p)} := \sum_{k = 0}^{2^n-1}\big|x((k+1)2^{-n}) - x(k2^{-n})\big|^p,
	$$
	provided that this limit exists. We will say that $x$ admits the \emph{roughness exponent} $R \in [0,1]$ if 
	\begin{equation}\label{Hurst 1}
	\lim_{n \ua \infty}\<x\>_n^{(p)}  = \begin{cases}
			0 &\quad \text{for $p > 1/R$},\\
			\infty &\quad \text{for $p <1/R$}.
		\end{cases}
	\end{equation}
	Intuitively, the smaller $R$, the rougher the trajectory $x$ will look. For
	instance, if $x$ is continuously differentiable, then \eqref{Hurst 1} 
	holds with $R=1$, and this is also the largest possible value for $R$ unless
	$x$ is constant.  If $x$ is a typical sample path of a continuous
	semimartingale such as Brownian motion, then \eqref{eq_Hurst} holds with
	$R=1/2$. If $x$ is a typical sample path of fractional Brownian motion,
	then $R$  is equal to its classical Hurst parameter.
	
When measuring the roughness of a trajectory, there are strong reasons for
favouring $p^{\text{th}}$ variation over other measures such as fractal
dimension. First, the $p^{\text{th}}$ variation can be measured in a
straightforward manner. Second, other measures may lead to diverging results,
and there is no canonical choice. For instance, the graph of a trajectory may
have different Hausdorff and box dimensions. Third, and probably most
importantly, the $p^{\text{th}}$ variation plays a crucial role in the extension
of It\^o calculus to rough trajectories. A pathwise and strictly model-free
version of such an extension for integrators with arbitrary $p^{\text{th}}$
variation was recently established by Cont and Perkowski~\cite{ContPerkowski}.
It\^o calculus for rough trajectories also plays an important role in
applications, for instance to rough volatility models. These models are based on
the observation by Gatheral et al.~\cite{GatheralRosenbaum} that the Hurst
parameter of the realized volatility of many financial time series is
rather small, which makes realized volatility much rougher than the sample paths
of a continuous semimartingale.  

In \Cref{Thm_main}, our main result, we establish the existence of the roughness exponent under a mild condition on the Faber--Schauder coefficients
 of $x$. We call this condition the reverse Jensen condition.  Even more important than the existence of the roughness exponent
 is the fact that  the reverse Jensen condition guarantees that the 
 roughness exponent of $x$ can be obtained as the limit of the classical
 Gladyshev estimates $\wh R_n(x)$.  This part of \Cref{Thm_main} can be viewed
 as a strong consistency result for the Gladyshev estimators in an entirely
 model-free setting, because it works strictly trajectory-wise and requires no probabilistic assumptions. In particular, we do not assume that $x$ can be
 obtained as a sample path of some stochastic process, even though this is clearly the typical application we have in mind. While the statement of \Cref{Thm_main} appears to be deterministic,
its proof is probabilistic. It relies on martingale techniques that are applied
to a martingale that is hidden in the Faber--Schauder expansion of $x$. The corresponding theory is developed in \Cref{martingale section}, where also the proof of \Cref{Thm_main}  is given.
 
 The Gladyshev estimator has traditionally been used to estimate the 
  Hurst parameter of stochastic processes such as fractional Brownian motion.
  So one conceptual contribution of \Cref{Thm_main} is the observation that the
  Gladyshev estimator actually estimates the roughness exponent and that
  its limit only delivers the classical Hurst parameter if and only if the
  latter coincides with the former. This is the case for fractional Brownian motion, which is the subject of \Cref{Gaussian examples section}. Our main result in this section, \Cref{Theorem example FBM}, shows in particular that the typical sample paths of fractional Brownian motion with drift satisfy the reverse Jensen condition. Additionally, it provides almost-sure convergence rates for the corresponding Gladyshev estimates.
  
 The Gladyshev estimator was originally derived from Gladyshev's theorem \cite{Gladyshev}, which is an almost-sure limit theorem for the weighted quadratic variation of certain Gaussian processes. In \Cref{weighted QV section} we systematically investigate the relations between weighted quadratic variation and the roughness exponent. In particular, we show that for any continuous function the existence of the weighted quadratic variation is strictly stronger than the convergence of the Gladyshev estimates. However, we also provide an example of a function  for which the parameter derived from the weighted quadratic variation is different from the roughness exponent. 
  
Due to a result by Rosenbaum~\cite{Rosenbaum2009}, the existence of the weighted quadratic variation is in turn closely related to the concept of Besov regularity. In  \Cref{Besov section}, we systematically analyze relations between  variants of this concept and the existence of a roughness parameter. First, we observe that  Eyink's formulation of  Besov regularity  \cite{EyinkBesov} leads  to a roughness concept that is generally different from ours. Then we examine somewhat more restrictive requirement, which is sometimes proposed in the rough volatility context. Our main result in this section, \Cref{Theorem Besov}, links the corresponding roughness measure to $p^{\text{th}}$ variations. However, \Cref{example Besov} shows that even this concept of Besov regularity does not imply the existence of a roughness exponent.

The results in Sections \ref{weighted QV section} and \ref{Besov section} hold also if the reverse Jensen condition is not assumed. 
In \Cref{gen section} we collect several additional results that are also valid in this more general context.

In \Cref{stat section}, we discuss the problem of estimating the roughness
exponent from discrete observations of a given function $x$. Here, the fact that the Gladyshev estimator is not
scale-invariant can become an issue. We therefore provide several families of
scale-invariant estimators that are derived from the sequence  $(\wh
R_n)_{n\in\bN}$ without deteriorating the corresponding rate of convergence. We also relate the estimator used in~\cite{GatheralRosenbaum}
to those families.

 In \Cref{irregular
section}, we provide an extension of our main results to the case in which the
$p^{\text{th}}$ variation is defined over a sequence of unequally spaced
partitions.

	\section{The roughness exponent: definition and
	existence}\label{general section}
	
	Let $x:[0,1]\to\bR$ be any fixed  continuous function. This function can be a natural or economic time series, a typical sample path of
	a stochastic process, or a fractal function. What these phenomena have in
	common is that the corresponding trajectories are not smooth but exhibit a
	certain degree of \lq roughness\rq. In the sequel, our goal is to quantify
	and measure the degree of that roughness. To this end, we will henceforth
	exclude the trivial case of a constant function $x$.

	For  $p >0$ and $n \in \bN$, we define 	\begin{equation}\label{Vnpx}
		\<x\>_n^{(p)} := \sum_{k = 0}^{2^n-1}\big|x((k+1)2^{-n}) -
		x(k2^{-n})\big|^p,
	\end{equation}
which can be regarded as the $p^{\text{\rm th}}$ variation of the function $x$
sampled along the dyadic partition
$\{k2^{-n}:k=0,\dots, 2^n\}$. Suppose that there exists $q $ such that
	\begin{equation}\label{eq_Hurst}
		\lim_{n \ua \infty}\<x\>_n^{(p)}  = \begin{cases}
			0 &\quad \text{for $p > q$},\\
			\infty &\quad \text{for $p <q$}.
		\end{cases}
	\end{equation}
Intuitively, the larger $q$, the rougher the  trajectory $x$ will look. For
 instance, if $x$ is continuously differentiable, then \eqref{eq_Hurst}  
 holds with $q=1$, and this is also the smallest possible value for $q$, because
 $x$ is nonconstant by assumption. 
Moreover, it is easy to see that \eqref{Hurst 1} holds if $\lim_n\<x\>^{(p)}_n$ exists in $(0,\infty)$ (see, e.g., the final step in the proof of Theorem 2.1 in~\cite{MishuraSchied2}). In particular, if
 $x$ is a typical sample path of a
 continuous semimartingale such as Brownian motion, then \eqref{eq_Hurst} holds with $q=2$. More generally, if $x$ is a typical sample path of a
 fractional Brownian motion with Hurst parameter $H\in(0,1)$,  then $q$  is
 equal to $1/H$; see \Cref{Theorem example FBM}  and the subsequent paragraph. The extreme case $q = \infty$ implies that $\<x\>^{(p)}_n$ diverges to infinity for any $p \ge 1$; an example is provided in \Cref{H=0 example}.

 \begin{definition}\label{def_Hurst}
 Suppose that there exists $q\in[1,\infty]$
 such that \eqref{eq_Hurst} holds. Then $R:=1/q$ is called the \emph{roughness exponent} of $x\in C[0,1]$.
 \end{definition}

A concept closely related to the roughness exponent was used by Gatheral
et al.~\cite{GatheralRosenbaum} to quantify the roughness of empirical
volatility time series. In~\cite{MishuraSchied2},~\cite{HanSchiedZhang1} and
\cite{HanSchiedZhang2}, the roughness exponent  was computed for certain
families of fractal functions. The standard Hurst parameter, on the other hand,
is defined via the autocorrelation of the time series and  measures the amount
of  long-range dependence in the data. For many stochastic processes, including
fractional Brownian motion, the standard Hurst parameter coincides with the
roughness exponent of the sample paths. In general, however, these two
parameters may be different; for a discussion, see   Gneiting and Schlather
\cite{Gneiting2004Hurst}, where roughness is measured in terms of fractal
dimension. Later, Bennedsen et al.~\cite{BennedsenLundePakkanen} adopted this
idea to study the bifurcation of the short- and long-term behaviour of
stochastic volatility with the goal of proposing a new class of stochastic
volatility models  that can simultaneously incorporate roughness and slowly
decaying autocorrelation. Finally, the concept of Besov regularity, introduced by Eyink~\cite{EyinkBesov} within the framework of Besov spaces, is somewhat related to our roughness exponent. The corresponding connections are explored  in  \Cref{Besov section}.

Note that if $x$ admits the roughness exponent $R$, then \eqref{eq_Hurst}
does not make any assertion on the existence of $\lim_n\<x\>_n^{(p)}$ for $p:=1/R$.
This limit always exists for $p=1$ and is equal to the total variation of $x$.
For $p>1$, however, the limit may or may not exist (see \Cref{couterexample weight quadratic}). If it does exist, it can be
interpreted as the  $p^\text{th}$ variation of the continuous function $x$ along
the sequence of dyadic partitions of $[0,1]$. This $p^\text{th}$ variation can
be used so as to prove a strictly  pathwise version of It\^o calculus, an
approach that was pioneered by F\"ollmer~\cite{FoellmerIto} for $p\le2$ and
recently extended to $p>2$ by Cont and Perkowski~\cite{ContPerkowski}.  Just as this notion of $p^{\text{th}}$ variation,  the roughness exponent of a trajectory will typically depend on the underlying partition sequence. As noted above, we have chosen here the dyadic partitions, because equally-spaced partitions are natural for many data sources and also lead to the simplest and most elegant mathematical statements. An extension to unequally spaced partitions is provided in 
\Cref{irregular section}.

There exist continuous functions that do not admit a roughness exponent;
		see \Cref{example Besov}. For this reason, we will first
		establish criteria for the existence of $R$. Let us start by defining
		\begin{equation}\label{q+q- eqn} q_*:= \sup\Big\{p>0:\liminf_{n \ua
		\infty}\<x\>_n^{(p)}  = \infty\Big\} \quad \text{and} \quad q^*:=
		\sup\Big\{p >0:\limsup_{n \ua \infty}\<x\>_n^{(p)}  = \infty\Big\}.
	\end{equation}
The proof of  \Cref{Prop_InfSup}  will show that the following alternative
formulas hold for $q_*$ and $q^*$,
\begin{equation}\label{q+q- alt eqn}
				q_*= \inf\Big\{p \ge 1:\liminf_{n \ua \infty}\<x\>_n^{(p)}  = 0\Big\} \quad \text{and} \quad q^*= \inf\Big\{p \ge 1:\limsup_{n \ua \infty}\<x\>_n^{(p)}  = 0\Big\}.
			\end{equation}

\begin{proposition}\label{Prop_InfSup} The roughness exponent of $x\in
 C[0,1]$ exists if and only if $q^* = q_*$. In this case, $R$  is given by
 $R = 1/q^* = 1/q_*$.
\end{proposition}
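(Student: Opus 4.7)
The plan is to extract from the uniform continuity of $x$ a contraction principle that organizes the behaviour of $V_n^{(p)}$ as a function of $p$ into essentially two regimes: infinite below a threshold and zero above it. Set $M_n := \max_{0 \le k < 2^n} |x((k+1)2^{-n}) - x(k2^{-n})|$. Since $x$ is uniformly continuous on $[0,1]$, $M_n \to 0$. First I would record the two pointwise comparisons
\begin{equation*}
V_n^{(p')} \le M_n^{p'-p}\, V_n^{(p)} \quad (p' > p,\; M_n \le 1), \qquad V_n^{(p)} \ge M_n^{p-1}\, V_n^{(1)} \quad (p < 1,\; M_n \le 1),
\end{equation*}
each of which is an immediate consequence of the monotonicity of $t \mapsto t^\alpha$ on $(0, M_n]$ for $\alpha>0$ or $\alpha<0$, respectively, together with $|x((k+1)2^{-n})-x(k2^{-n})| \le M_n$.

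Using the first comparison, I would argue that $\liminf_n V_n^{(p_0)} < \infty$ implies $\liminf_n V_n^{(p')} = 0$ for every $p' > p_0$ (pass to a subsequence realizing the liminf and use $M_{n_j}^{p'-p_0} \to 0$), and analogously for $\limsup$. Consequently each of the sets $A^- := \{p > 0 : \liminf_n V_n^{(p)} = \infty\}$ and $A^+ := \{p > 0 : \limsup_n V_n^{(p)} = \infty\}$ is downward closed in $(0,\infty)$ and thus equals an interval with right endpoint $q^-$ or $q^+$; moreover $\liminf_n V_n^{(p)} = 0$ for every $p > q^-$ and $\limsup_n V_n^{(p)} = 0$ for every $p > q^+$. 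Applying the second comparison together with the fact that $V_n^{(1)}$ increases to the total variation of $x$, which is strictly positive because $x$ is non-constant, one gets $V_n^{(p)} \ge M_n^{p-1}\cdot \tfrac{1}{2}\mathrm{TV}(x) \to \infty$ for every $p < 1$. Hence $q^\pm \ge 1$, and from this the alternative formulas \eqref{q+q- alt eqn} follow immediately: the set $\{p \ge 1 : \liminf_n V_n^{(p)} = 0\}$ contains $(q^-,\infty)$ and is disjoint from $[1, q^-)$, so its infimum is $q^-$, and similarly for $q^+$.

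With these structural facts in hand the main equivalence is short. If $q^- = q^+ =: q$, then for $p > q$ the limsup of $V_n^{(p)}$ is zero and for $p < q$ the liminf is infinite, so \eqref{eq_Hurst} holds and $H = 1/q$. Conversely, if $H = 1/q$ exists, then $\lim_n V_n^{(p)} = 0$ for all $p > q$ (which forces $q^\pm \le q$) and $\lim_n V_n^{(p)} = \infty$ for all $p < q$ (which forces $q^\pm \ge q$), so $q^- = q^+ = q = 1/H$. I expect no genuine obstacle beyond the contraction principle itself; the only mildly delicate point is confirming $q^\pm \ge 1$, since it is this fact that makes the restricted infimum in \eqref{q+q- alt eqn} coincide with $q^\pm$ rather than with $\max(q^\pm,1)$.
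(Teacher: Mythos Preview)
Your proof is correct and follows essentially the same route as the paper: both arguments hinge on the comparison $V_n^{(p')}\le M_n^{\,p'-p}V_n^{(p)}$ together with $M_n\to0$ to show that $\liminf_n V_n^{(p)}=0$ for all $p>q^-$ (and the analogous statement for $q^+$), from which the alternative formulas and the equivalence follow. Your additional step establishing $q^{\pm}\ge1$ via $V_n^{(p)}\ge M_n^{\,p-1}V_n^{(1)}$ for $p<1$ is a welcome clarification that the paper leaves implicit.
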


	\begin{proof}%[Proof of \Cref{Prop_InfSup}] 
	Clearly, $R$ is nonnegative by definition. Moreover, since the function $x$
	is non-constant by assumption, we have $\<x\>_n^{(p)}\to\infty$ for all $p<1$.
	Hence, we must have $R\le1$.
	
	The remaining assertion follows easily from the alternative representation
	\eqref{q+q- alt eqn} of $q_*$ and $q^*$, so it is sufficient to establish
	\eqref{q+q- alt eqn}. To prove this result for  the case $q_* < \infty$, let
	$q_0:= \inf\{p \ge 1:\liminf_{n \ua \infty}\<x\>_n^{(p)}  = 0\}$. Let $n_0$ be
	such that $\max_k|x((k+1)2^{-n})-x(k2^{-n})|\le1$ for $n\ge n_0$. Then
	$p\mapsto \<x\>_n^{(p)} $ is non-increasing for $n\ge n_0$, and so we must have
	$q_*\le q_0$. To show the converse inequality, we  show that
	$\liminf_n\<x\>_n^{(p)}  = 0$ for every $p > q_*$. To this end, we assume by
	way of contradiction that there exists $p > q_*$ such that
	$v:=\liminf_n\<x\>_n^{(p)}  > 0$. The definition of $q_*$ implies that we must
	have $v<\infty$. Next, for any $\eps>0$, there exists $n_1\in\bN$ such that
	$\max_k|x((k+1)2^{-n})-x(k2^{-n})|\le\eps$ for $n\ge n_1$. Thus, for
	$q\in(q_*,p)$ and $n\ge n_1$,
	\begin{align*}
			\<x\>_n^{(p)}   \le \max_k\big|x((k+1)2^{-n})-x(k2^{-n})\big|^{p-q}\cdot \<x\>_n^{(q)}  \le  \eps^{p-q}\<x\>_n^{(q)} .
		\end{align*}
		When sending $\eps\da0$, we see that $0<v\le
		\eps^{p-q}\liminf_n\<x\>_n^{(q)} $ for all $\eps>0$. This implies that
		$\liminf_n\<x\>_n^{(q)} =\infty$, which, in view of $q>q_*$, contradicts the
		definition of $q_*$. This establishes $q_0=q_*$. If $q_* = \infty$,
		then for each $p > 0$, we must have $\liminf_{n \ua \infty}\<x\>_n^{(p)} =
		\infty$. Thus, $\{p \ge 1:\liminf_{n \ua \infty}\<x\>_n^{(p)} = 0\} =
		\emptyset $, which establishes the first identity in \eqref{q+q- alt
		eqn} for the case $q_* = \infty$. The second identity in  \eqref{q+q-
		alt eqn} is proved in the same manner. 
	\end{proof}

Our analysis of the roughness exponent is based on the Faber--Schauder
wavelet expansion of continuous functions.  Recall that the
\textit{Faber--Schauder functions} are defined as 
	\begin{equation*}
		e_{\emptyset}(t):= t, \quad e_{0,0}(t):= (\min\{t,1-t\})^{+}, \quad e_{m,k}(t):= 2^{-m/2}e_{0,0}(2^mt-k)
	\end{equation*}
	for $t \in \bR$, $m \in \bN$ and $k \in \bZ$. It is well  known that the
	restriction of the Faber--Schauder functions to $[0,1]$ form a Schauder
	basis for $C[0,1]$. More precisely, every function $x \in C[0,1]$ can be
	uniquely represented by the following uniformly convergent series, 
	\begin{equation}\label{eq_Faber_expansion}
		x= x(0)+\left(x(1)-x(0)\right)e_{\emptyset} + \sum_{m = 0}^{\infty}\sum_{k = 0}^{2^m-1}\theta_{m,k}e_{m,k},
	\end{equation}
	where the \emph{Faber--Schauder coefficients} $\theta_{m,k}$ are given by 
	\begin{equation*}
		\theta_{m,k} = 2^{m/2}\left(2x\Big(\frac{2k+1}{2^{m+1}}\Big)-x\Big(\frac{k}{2^m}\Big)-x\Big(\frac{k+1}{2^m}\Big)\right).
	\end{equation*}
We furthermore denote for $n \in \bN$,
		\begin{equation}\label{eq_zeta_def}
		s_n:= \sqrt{\sum_{m = 0}^{n-1}\sum_{k = 0}^{2^m-1}\theta^2_{m,k} }, \quad \text{and} \quad  \wh R_n(x) := 1 - \frac{1}{n}\log_2 s_n.
	\end{equation}

	Our main thesis in this paper is that the roughness exponent of $x$
	should be equal to the limit of $\wh R_n(x) $, provided this limit exists.
	\Cref{Thm_main} will rigorously establish this result under the following
	mild condition on the Faber--Schauder coefficients of $x$. To formulate this
	and related conditions, we will say that a function $\varrho:\bR_+\to\bR_+$
	is \emph{subexponential}  if $\frac1t\log\varrho(t)\to0$ as $t\ua\infty$.

	\begin{definition}\label{Definition reverse Jensen condition} The Faber--Schauder coefficients of $x$ satisfy the
	\emph{reverse Jensen condition} if for each $p \ge 1$ there exist
	$n_p\in\bN$ and a non-decreasing subexponential function $\varrho_p: \bN
	\rightarrow [1,\infty)$ such that	
		\begin{equation}\label{reverse Jensen condition}		
			\frac1{\varrho_p(n)} s_n^p\le \frac1{2^{n-1}}\sum_{k=0}^{2^{n-1}-1}\Big(\sum_{m=0}^{n-1}2^m\theta^2_{m,\lfloor 2^{m-n+1}k\rfloor}\Big)^{p/2}\le\varrho_p(n)s_n^{p}\qquad\text{for $n\ge n_p$}.
		\end{equation}
	\end{definition}

If $p\ge2$, then Jensen's inequality implies that the left-hand inequality in
 \eqref{reverse Jensen condition} holds with $\varrho_p=1$. Likewise, the upper
 bound   holds with $\varrho_p=1$ if $p\le2$. So only the other cases are
 nontrivial, which explains our terminology \lq\lq reverse Jensen condition".
 This terminology will become even clearer in \Cref{reverse Jensen Remark},
 where an equivalent probabilistic formulation is given.  As we are going to see
 in \Cref{Theorem example FBM}, the reverse Jensen condition is satisfied for the typical
 sample paths of fractional Brownian motion with arbitrary Hurst parameter. 
 Moreover, two alternative conditions that are easier to verify, but are also
 stronger,  will  be provided in \Cref{Conditions prop}. Now we can state our
 first main result, which establishes the existence of the roughness
 exponent and its relation to the limit of the sequence
 $\wh R_n(x) $ under the assumption that the reverse Jensen condition holds. The second part of \Cref{Thm_main} states that condition \eqref{reverse Jensen condition} at $p=1/R$ is  necessary  for the convergence of $\wh R_n(x)$ to the roughness exponent $R$.

	\begin{theorem}\label{Thm_main} Let $\wh R_n$ be as in \eqref{eq_zeta_def} and suppose that the Faber--Schauder
	coefficients of $x$ satisfy the reverse Jensen condition stated in \Cref{Definition reverse Jensen condition}.	 Then the
	function $x$ admits a roughness exponent $R$ if and only if $\wh R_n(x)\to R$ as $n \ua \infty$. Conversely, suppose that $\wh R_n(x)$ converges to the roughness exponent $R$ of $x$. Then \eqref{reverse Jensen condition} holds at $p = 1/R$.
\end{theorem}

\Cref{Thm_main} will be proved in the subsequent \Cref{martingale section}. It
can be regarded as a model-free consistency result for the estimator $\wh R_n$ of the roughness exponent of the continuous function $x$. It has the following two aspects.

First, \Cref{Thm_main} is  model-free  in the sense that no assumptions are made on $x$
except the reverse Jensen condition. In particular, we do not assume that $x$ is
a realization of a stochastic process with prescribed dynamics, even though this is clearly the typical application we have in mind. This line of thought is similar in spirit to the model-free approach to stochastic analysis, which was pioneered by F\"ollmer~\cite{FoellmerIto} and Lyons~\cite{Lyons98}. In this approach, one derives results for deterministic trajectories satisfying 
 certain sample path properties, thus avoiding other implicit assumptions  (e.g., a L\'evy-type modulus of continuity or a law of the iterated logarithm) usually imposed through fixing a probabilistic model. The results can then be applied to  all probabilistic models whose trajectories satisfy the assumed sample path properties almost surely. In  our analysis of fractional Brownian motion with drift (\Cref{Gaussian examples section}) we follow precisely this recipe. We show that almost all of the  sample paths satisfy the reverse Jensen condition and we identify the Hurst parameter $H$ as the common roughness exponent of the trajectories. We also provide almost sure convergence rates for the convergence of $\wh R_n$ to $H$. 
 
 Second,  based on the
work by Gladyshev~\cite{Gladyshev}, it has long been known that $\wh R_n$, which is  sometimes called the \emph{Gladyshev estimator}, estimates the Hurst parameter for fractional Brownian motion and other Gaussian processes; see, e.g.,  \cite{KubiliusMishura}  
for an overview.  \Cref{Thm_main} now clarifies   that  $\wh R_n$ is actually an estimator   for the roughness exponent; it  estimates the classical Hurst parameter $H$ 
only if $H$  coincides with the roughness exponent. Our theorem thus demonstrates that the Gladyshev estimator has in fact a much wider scope of possible applications, provided that it is correctly understood as an estimator of the roughness exponent and not of the Hurst parameter. The same is true for certain variants of the Gladyshev estimator such as the scale-invariant modifications discussed in \Cref{stat section}.

The statement of \Cref{Thm_main} may become false if the reverse Jensen condition is not satisfied; see \Cref{remark Jensen counter} for a counterexample. 
However, as shown by the next result, an exception occurs  for functions $x$ that display diffusive behavior in the sense that they admit the roughness exponent $R=1/2$. This includes in particular the typical sample paths of any continuous semimartingale with nontrivial martingale component.

\begin{corollary}\label{diffusive cor} Let $x\in C[0,1]$ be any function with roughness exponent $1/2$. Then $\lim_n\wh R_n(x)=1/2$. 
\end{corollary}

The preceding corollary will follow immediately from  \Cref{Prop_Bias}. 
 The following remark outlines what can be shown if the reverse Jensen condition holds but $\wh R_n(x)$ does not converge to a limit.

		\begin{remark}\label{xipm remark}
		 Suppose that the reverse Jensen condition holds and let us denote 
			\begin{equation}\label{eq xi pm}
				 r^+:= \limsup_{n \ua \infty} \wh R_n(x) \quad \text{and} \quad  r^-:= \liminf_{n \ua \infty} \wh R_n(x) .
			\end{equation}
			One can show as in the proof of \Cref{Thm_main} that
			\begin{equation*}
				\limsup_{n \ua \infty}\<x\>_n^{(p)}  = \begin{cases}
					0 &\text{if } p >  1/r^- ,\\
					\infty &\text{if } p <  1/r^- ,\\
				\end{cases} \qquad\qquad				\liminf_{n \ua \infty}\<x\>_n^{(p)}  = \begin{cases}
					0 &\text{if } p >  1/r^+ ,\\
					\infty &\text{if } p <  1/r^+ .\\
				\end{cases}
			\end{equation*}
		 To wit, under the reverse Jensen condition, we have $q^\ast = 1/r^-$ and $q_\ast = 1/r^+$. By \Cref{Prop_InfSup}, the roughness
		exponent is well defined if and only if $r^+ = r^-$, and this
		special case is discussed in \Cref{Thm_main}; on the other hand, if
		$r^+ > r^-$, the roughness exponent does not exist, and
		the sequence $(\<x\>_n^{(p)})_{n \in \bN_0} $ will fluctuate between zero
		and  infinity for any $p \in  (1/r^+,1/r^-)$. 
	\end{remark}
	
	\begin{remark}\label{Takagi class rem} Suppose $x\in C[0,1]$ is such that its Faber--Schauder coefficients  $\theta_{m,k}$ are the same for all~$k$. Then the
	reverse Jensen condition is trivially satisfied. These functions form the so-called Takagi class~\cite{HataYamaguti}, and, beginning with the subsequent Examples \ref{TL ex} and \ref{H=0 example}, several of our examples and counterexamples will belong to this class. The two Examples \ref{TL ex} and \ref{H=0 example} will in particular present for every $R\in[0,1]$ a function $x$ that satisfies the reverse Jensen condition and has roughness exponent $R$. 	\end{remark}

\begin{example}\label{TL ex} For $\alpha\in(-\sqrt2, \sqrt2)$, consider the
	function with Faber--Schauder coefficients $\theta_{m,k}=\alpha^m$.
	These functions are often called the Takagi--Landsberg functions. For $|\alpha|>1/\sqrt2$, it follows from~\cite[Theorem 2.1]{MishuraSchied2} that their roughness exponent is given by $R=\frac12-\log_2|\alpha|$. Our \Cref{Thm_main}  provides a very short proof of this fact: since
	$$s_n^2=\sum_{m=0}^{n-1}\sum_{k=0}^{2^m- 1}\alpha^{2m}=\frac{(2\alpha^2)^n-1}{2\alpha^2-1},
	$$
	we have
	$\frac1n\log_2s_n\to\frac12\log_2(2\alpha^2)=\frac12+\log_2|\alpha|$,
	which yields the claim. For $|\alpha|<1/\sqrt2$, we get in the same way that $s_n^2\to (1-2\alpha^2)^{-1}$. It will be shown in \Cref{BV prop} (a) that $x$ must hence be of bounded variation. Finally, in the case $|\alpha|=1/\sqrt2$, we have $s_n^2=n$, which also leads to $R=1$. However, in this case, $x$ is the classical Takagi function, which is nowhere differentiable and hence not of bounded variation;  see Remark 2.2 (i) in~\cite{SchiedZZhang} for  details.
\end{example}

On the other extreme end of the spectrum are functions with roughness
exponent $R=0$. Such a function is constructed in the following example.

\begin{example}\label{H=0 example} Consider the function with Faber--Schauder coefficients $\theta_{m,k} = 2^{m/2}(m+1)^{-2}$.
	One easily checks that this choice gives rise to a
	well-defined continuous function.  As in~\cite[Proposition
	3.1]{HanSchiedZhang2}, one can show that $\<x\>_n^{(p)}\to\infty$ for all
	$p\ge1$. It follows that $q_*$ and $q^*$, as defined in \eqref{q+q- eqn},
	are both infinite, and so \Cref{Prop_InfSup} yields $R=0$. 
\end{example}

In the following \Cref{martingale section}, we present the proof of
\Cref{Thm_main}. It is based on a martingale that is hidden in the
Faber--Schauder expansion of a continuous function $x$.  

\section{A martingale hidden in the Faber--Schauder expansion of a continuous
function}\label{martingale section}

In this section, we use martingale techniques so as to develop the probabilistic tools for our analysis of the roughness exponent and we will prove  \Cref{Thm_main}. The idea of using a probabilistic approach to the $p^{\text{\rm th}}$ variation of functions goes back to~\cite{MishuraSchied2,SchiedZZhang}, where it was used for for certain fractal functions. For a general function $x\in C[0,1]$, however, the tools from~\cite{MishuraSchied2,SchiedZZhang} are no longer sufficient; see \Cref{Remark independent}.

Let $(\Omega,\cF, \bP)$ be a probability space supporting an i.i.d.~sequence $(U_n)_{n \in \bN}$ of $\{0,1\}$-valued random variables with symmetric Bernoulli distribution.
	 Furthermore, we define the stochastic processes 
	\begin{equation}\label{eq_Sm_Def}
	V_0:=0,\quad	V_m :=\sum_{k = 1}^{m}2^{-k}U_k \quad \text{and} \quad S_m :=2^m\theta^2_{m,2^mV_m}, \quad m \in \bN_0.
	\end{equation}
	 Note that $V_m$ is uniformly distributed on $\{k2^{-m}:k=0,\dots, 2^m-1\}$.

	We next define the filtration $\cF_0 := \{\emptyset,\Omega\}$ and
	$\cF_n:=\sigma(U_1,\cdots,U_n)$ for $n \in \bN$. Since $U_1, U_2, \cdots,
	U_n$ can be uniquely recovered from $V_n$, we also have $\cF_n =
	\sigma(V_n)$. Let $Y_n:= 2^{n/2}\theta_{n,2^nV_n}\cdot(1-2U_{n+1})$. As $(1-2U_{m})$ defines an i.i.d.~sequence of centered
	random variables, and $\theta_{m,2^mV_m}$ is $\cF_m$-measurable, the
	sequence
	   $$X_n := \sum_{m =
	   	0}^{n-1}Y_m = \sum_{m =
	    0}^{n-1}2^{m/2}\theta_{m,2^mV_m}\cdot(1-2U_{m+1}),\qquad n\in\bN,$$ is
	    an $(\cF_n)$-martingale.  The link between this martingale and the
	    quantities  $\<x\>_n^{(p)}$ is established in the following proposition,
	    which  provides the key to the entire analysis in this paper. 
	
	\begin{proposition}\label{Proposition_Burk} Suppose that  $x(0) = x(1) = 0$.
	Then, for  $p \ge 1$ and $n\in\bN$, 
		\begin{equation}\label{martingale rep vor Vn}
		\<x\>_n^{(p)}=2^{n(1-p)}\bE\big[|X_n|^p\big].
		\end{equation}
	Moreover, there exist  constants $0 < A_p \le B_p < \infty$ depending only
				on $p$  but not on $x$, such that for all $n\in\bN$,
		\begin{equation}\label{eq_burkholder}
			A_p2^{n(1-p)}\bE\Big[\Big(\sum_{m = 0}^{n-1}S_m\Big)^{p/2}\Big] \le \<x\>_n^{(p)}  \le B_p2^{n(1-p)}\bE\Big[\Big(\sum_{m = 0}^{n-1}S_m\Big)^{p/2}\Big].
		\end{equation}
		Furthermore, in the special case $p=2$, we have
				\begin{equation}\label{QV eq}
		\<x\>_n^{(2)}  =2^{-n}s_n^2.
		\end{equation}
	\end{proposition}

The formula \eqref{QV eq} was first obtained by Gantert~\cite{Gantert} via a different proof. It can be used to simplify the computation of $s_n$ and $\wh R_n(x)$.

\begin{remark}\label{Remark independent}
Suppose that $x$ belongs to the Takagi class \cite{HataYamaguti}, and $\theta_{m,k} = \alpha_m$ for a sequence of real numbers $(\alpha_m)$ for which the series $\sum_{m = 0}^{\infty}2^{m/2}\alpha_m$ converges absolutely. The roughness exponent of functions in the Takagi class was studied in \cite{MishuraSchied2, SchiedZZhang, HanSchiedZhang2}. In this case, $Y_n = 2^{n/2}\alpha_n(1 - 2U_{n+1})$ defines a sequence of independent centered random variables, and the sequence $(X_n)$ remains an $(\cF_n)$-martingale. The independence of $(Y_n)$ allows the application of Khintchine's inequality, which directly gives
\begin{equation*}
A_p2^{n(1-p/2)}\left(\<x\>^{(2)}_n\right)^{p/2} = A_p2^{n(1-p)}(s_n^2)^{p/2} \le	\<x\>^{(p)}_n  \le B_p2^{n(1-p)}(s_n^2)^{p/2} = B_p2^{n(1-p/2)}\left(\<x\>^{(2)}_n\right)^{p/2},
\end{equation*}
where $A_p$ and $B_p$ are constants with $0 < A_p \le B_p < \infty$, depending only on $p$ but not on $x$. The analysis in \cite{SchiedZZhang, HanSchiedZhang2} then proceeds from the above inequality. However, for a general function $x \in C[0,1]$, the random variables $(Y_n)$ may no longer be independent. Thus, the techniques in \cite{MishuraSchied2, SchiedZZhang, HanSchiedZhang2} fail to apply to the scope of this paper.
\end{remark}

	 \begin{remark}\label{reverse Jensen Remark} In this remark, we collect several alternative formulations of the reverse Jensen condition. Note that 
	\begin{equation}\label{eq S_m power}
		\bE\Big[\sum_{m=0}^{n-1}S_m\Big]=s_n^2\qquad\text{and}\qquad \bE\bigg[\bigg(\sum_{m=0}^{n-1}S_m\bigg)^{p/2}\bigg]=\frac1{2^{n-1}}\sum_{k=0}^{2^{n-1}-1}\Big(\sum_{m=0}^{n-1}2^m\theta^2_{m,\lfloor 2^{m-n+1}k\rfloor}\Big)^{p/2}.
	\end{equation}
	We can thus rephrase the reverse Jensen condition as follows: there exist
	$n_p\in\bN$ and a non-decreasing subexponential function $\varrho_p: \bN_0
	\rightarrow [1,\infty)$ such that	
	\begin{equation}\label{Thm_main cond}
		\frac1{\varrho_p(n)} \bE\Big[\sum_{m=0}^{n-1}S_m\Big]^{p/2}\le \bE\bigg[\bigg(\sum_{m=0}^{n-1}S_m\bigg)^{p/2}\bigg]\le\varrho_p(n) \bE\Big[\sum_{m=0}^{n-1}S_m\Big]^{p/2}\qquad\text{for $n\ge n_p$}.
	\end{equation}
	
	Moreover, \Cref{Proposition_Burk} shows that the reverse Jensen condition can  be reformulated by means of $p^{\rm th}$ variations: for each $p \ge 1$, there exist $n_p \in \bN$ and a non-decreasing subexponential function $\varrho_p: \bN_0 \rightarrow [1,\infty)$ such that 
		\begin{equation}\label{eq Reverse Jensen variation}
			\frac{1}{\varrho_p(n)}2^{n(1-p/2)}\left(\<x\>^{(2)}_n\right)^{p/2} \le \<x\>^{(p)}_n \le \varrho_p(n)2^{n(1-p/2)}\left(\<x\>^{(2)}_n\right)^{p/2} \qquad \text{for}~ n \ge n_p.
		\end{equation}
Comparing  \eqref{eq Reverse Jensen variation} with the same inequality for a different value $q\ge1$ gives the following more general equivalent statement of the reverse Jensen condition: for every $1 \le p,q < \infty$, there exists $n_{p,q} \in \bN$ and a non-decreasing subexponential function $\varrho_{p,q}: \bN_0 \rightarrow [1,\infty)$ such that 
		\begin{equation}\label{eq Reverse Jensen variation 2}
			\frac{1}{\varrho_{p,q}(n)}2^{n(1-p/q)}\left(\<x\>^{(q)}_n\right)^{p/q} \le \<x\>^{(p)}_n \le \varrho_{p,q}(n)2^{n(1-p/q)}\left(\<x\>^{(q)}_n\right)^{p/q} \qquad \text{for}~ n \ge n_{p,q}.
		\end{equation}
		If $p \ge q$, then the left-hand inequality in \eqref{eq Reverse Jensen variation 2} holds with $\varrho_{p,q} = 1$. Likewise, if $p \le q$, then the right-hand inequality in \eqref{eq Reverse Jensen variation 2} holds with $\varrho_{p,q} = 1$, which can also be understood as the well-known inequality for generalized means.
	\end{remark}

For the proof of \Cref{Proposition_Burk}, we need the following lemma. 
	
	\begin{lemma}\label{Lemma_Binary} For $n \in \bN$ and $m \le n-1$, if $t$ is
		a  dyadic rational number in $ [0,1)$ with binary expansion $ t =
		\sum_{j = 1}^{n}2^{-j}a_j
		$
				for $a_j \in \{0,1\}$, we have 
		$
			e_{m,k}(t+2^{-n}) - e_{m,k}(t) = 2^{m/2-n}\left(1-2a_{m+1}\right)\Ind{\{ \lfloor2^mt\rfloor=k\}}.
		$
		\end{lemma}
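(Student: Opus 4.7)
The plan is a direct case check based on the piecewise-linear structure of $e_{m,k}$, exploiting the fact that $2^{-n}\le 2^{-m-1}$, so that the step from $t$ to $t+2^{-n}$ is never large enough to cross the two breakpoints of $e_{m,k}$ at $k2^{-m}$, $(2k+1)2^{-m-1}$, and $(k+1)2^{-m}$.

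First I would record the explicit formula: from $e_{m,k}(t)=2^{-m/2}e_{0,0}(2^m t-k)$ and the definition of $e_{0,0}$, one obtains slope $+2^{m/2}$ on $[k2^{-m},(2k+1)2^{-m-1}]$, slope $-2^{m/2}$ on $[(2k+1)2^{-m-1},(k+1)2^{-m}]$, and slope $0$ elsewhere. Next I would split according to $\lfloor 2^mt\rfloor$. If $\lfloor 2^mt\rfloor<k$, then $t\le k2^{-m}-2^{-n}$ (since $t$ is dyadic of denominator $2^n$), so $t+2^{-n}\le k2^{-m}$ and both arguments lie in the zero-region; if $\lfloor 2^mt\rfloor>k$, both lie in $[(k+1)2^{-m},1]$, again the zero-region. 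In either case the increment vanishes, matching the indicator in the right-hand side.

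It remains to treat $\lfloor 2^mt\rfloor=k$. Here I would use the binary expansion to locate $t$ relative to the midpoint $(2k+1)2^{-m-1}=k2^{-m}+2^{-m-1}$: writing $t-k2^{-m}=\sum_{j=m+1}^{n}2^{-j}a_j$, the term $2^{-m-1}a_{m+1}$ controls which half contains $t$. If $a_{m+1}=0$, then $t-k2^{-m}\le 2^{-m-1}-2^{-n}$, hence $t+2^{-n}\le k2^{-m}+2^{-m-1}$, so both $t$ and $t+2^{-n}$ lie in the left-half linear piece where the slope is $+2^{m/2}$, yielding an increment $2^{m/2-n}$. If $a_{m+1}=1$, then $t-k2^{-m}\ge 2^{-m-1}$ and $t+2^{-n}-k2^{-m}\le 2^{-m}$, so both lie in the right-half linear piece where the slope is $-2^{m/2}$, yielding $-2^{m/2-n}$. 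Combining gives the factor $1-2a_{m+1}$ and completes the proof.

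The only mildly delicate point is verifying the endpoint cases where $t+2^{-n}$ equals one of the breakpoints $(2k+1)2^{-m-1}$ or $(k+1)2^{-m}$; since $e_{m,k}$ is continuous and the two adjacent linear pieces agree there, the slope computation remains valid, so no separate argument is needed.
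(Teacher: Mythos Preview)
Your proof is correct and follows essentially the same approach as the paper's. The paper encodes your slope analysis via the step function $\psi=\Ind{[0,1/2)}-\Ind{[1/2,1)}$, writing the increment as $2^{m/2-n}\psi(2^mt-k)$ and then using the binary expansion to evaluate $\psi$, which is exactly your case split on $\lfloor 2^mt\rfloor$ and on $a_{m+1}$ expressed in different notation.
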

		
		\begin{proof}
			For $m \le n-1$ and $\psi := \Ind{[0,1/2)}-\Ind{[1/2,1)}$, we have 
			\begin{equation}\label{eq_Binary_diff}
				\begin{split}
					e_{m,k}(t+2^{-n}) - e_{m,k}(t) &= 2^{-m/2}\left(e_{0,0}(2^mt+2^{m-n}-k)-e_{0,0}(2^mt-k)\right)=2^{m/2-n}\psi(2^mt-k),
				\end{split}    	
			\end{equation}
			Furthermore, it follows that
			\begin{equation*}
				\begin{split}
					\psi(2^mt-k) &= \psi\Big(\sum_{j = 1}^{n}2^{m-j}a_j-k\Big) = \psi\left( \sum_{j = m+1}^{n}2^{m-j}a_j-\Big(k-2^m\sum_{j = 1}^{m}2^{-j}a_j\Big)\right)\\ &=\psi\left(\frac12a_{m+1} + \sum_{j = m+2}^{n}2^{m-j}a_j-(k-\lfloor2^mt\rfloor)\right).
				\end{split}
			\end{equation*}
			As $k-\lfloor2^mt\rfloor \in \bZ$ and $\sum_{j = m+1}^{n}2^{m-j}a_j
			\in [0,1)$, we have $\psi(2^mt-k) \neq 0$ if and only if
			$\lfloor2^mt\rfloor = k$. Furthermore, since $\sum_{j =
			m+2}^{n}2^{m-j}a_j \in [0,1/2)$ and $a_{m+1}/2 \in \{0,1/2\}$, we
			must have 
			$$\psi\left(\frac12a_{m+1} + \sum_{j = m+2}^{n}2^{m-j}a_j\right) =
			\psi\Big(\frac12a_{m+1}\Big) = 1-2a_{m+1}.
			$$
			 Hence, 
			$
				\psi(2^mt-k) = \left(1-2a_{m+1}\right)\Ind{\{k = \lfloor2^mt\rfloor\}}
			$.		Substituting this identity back into \eqref{eq_Binary_diff} completes the proof.
		\end{proof}

	\begin{proof}[Proof of \Cref{Proposition_Burk}]
		For $n \in \bN_0$, let the $n^{\text{\rm th}}$ truncation $x_n$ of $x$ be given by
		\begin{equation}\label{truncation eq}
			x_n(t) := \sum_{m = 0}^{n-1}\sum_{k = 0}^{2^m-1}\theta_{m,k}e_{m,k}(t).
		\end{equation} 
		Since the Faber--Schauder functions $e_{m,k}$ vanish on $\{k2^{-n}:k=0,\dots, 2^n\}$ for $m \ge n$, we have $x(k2^{-n}) = x_n(k2^{-n})$ for $k=0,\dots, 2^n$. Using  the fact that $V_n$ is uniformly distributed on $\{k2^{-n}:k=0,\dots, 2^n-1\}$ and \Cref{Lemma_Binary}, we get
		\begin{equation}\label{eq_long_expectation}
			\begin{split}
				\<x\>_n^{(p)}  &= 2^n\bE\left[|x_n(V_n+2^{-n}) - x_n(V_n)|^p\right]
				\\
				& = 2^n\bE\left[\left|\sum_{m = 0}^{n-1}\sum_{k = 0}^{2^m-1}\theta_{m,k}\big(e_{m,k}(V_n+2^{-n}) - e_{m,k}(V_n)\big)\right|^p\right]\\&= 2^n\bE\left[\left|\sum_{m = 0}^{n-1}2^{m/2-n}\sum_{k = 0}^{2^m-1}\theta_{m,k}\left(1-2U_{m+1}\right)\Ind{\{k = 2^mV_m\}}\right|^p\right]\\&= 2^{n(1-p)}\bE\left[ \left|\sum_{m = 0}^{n-1}2^{m/2}\theta_{m,2^mV_m}(1-2U_{m+1})\right|^p \right]= 2^{n(1-p)}\bE\big[|X_n|^p\big],
			\end{split}
		\end{equation}
which yields \eqref{martingale rep vor Vn}.  Therefore, the Burkholder inequality implies the existence of constants $0 < A_p \le B_p < \infty$ depending only on $p$ such that
		\begin{equation}\label{eq_Inq}
			A_p2^{n(1-p)}\bE\Big[\Big(\sum_{m = 0}^{n-1}Y_m^2\Big)^{p/2}\Big] \le \<x\>_n^{(p)}  \le B_p2^{n(1-p)}\bE\Big[\Big(\sum_{m = 0}^{n-1}Y_m^2\Big)^{p/2}\Big].
		\end{equation}
		Moreover, as $|1-2U_{m+1}| = 1$, we have
		\begin{equation}\label{Burk p=2 eq}
			Y_m^2 = \left(2^{m/2}\theta_{m,2^mV_m}\right)^2\left(1-2U_{m+1}\right)^2 = 2^m\theta^2_{m,2^mV_m} = S_m.
		\end{equation}
	This completes the proof of \eqref{eq_burkholder}  for arbitrary $p>1$.
In the case $p=2$, the martingale differences $(Y_m)$ clearly satisfy
$\bE[\sum_{m = 0}^{n-1}Y_m^2]=\bE[(\sum_{m = 0}^{n-1}Y_m)^{2}]$,	
which yields \eqref{Burk p=2 eq} by way of \eqref{eq_long_expectation}, \eqref{eq_Inq}, and \eqref{Burk p=2 eq}. This completes the proof. \end{proof}

Now we move toward the proof of \Cref{Thm_main}. The following auxiliary lemma will be needed in that proof.

\begin{lemma}\label{lemma_Range}
	 If  the finite limit $\lim_n \wh R_n(x)$ exists, then it belongs to $ [0,1]$. 
\end{lemma}

\begin{proof}
Let  $\xi_n:= 1 - \wh R_n(x) = \frac{1}{n}\log_2 s_n$. It suffices to show that  $\xi:=\lim_n\xi_n=1- \lim_{n}\wh R_n(x)$ belongs to $[0,1]$. First, let us assume by way of contradiction that  $\xi<0$. Thus, there exists $n_1 \in \bN$ such that for $n \ge n_1$, we have $\xi_n \le -n\xi  /2$ and $s_n \le 2^{-n\xi  /2}$. However, this contradicts to the fact $(s_n)_{n \in \bN}$ is a  non-negative and nondecreasing sequence. Therefore, we must have $\xi  \ge 0$.

	Next, we assume by way of contradiction that  $\xi\in(1,\infty) $. For each $n \in \bN$, we take $\lambda(n):= 2^{-2n\xi }s_n^2$. Then $\lambda$ is a subexponential function, and 
	$
		\sum_{m = 0}^{n-1}\sum_{k = 0}^{2^m-1}\theta_{m,k}^2 = \lambda(n)2^{2n\xi }.
	$	It then follows that
	$		\sum_{k = 0}^{2^n-1}\theta^2_{n,k} = \lambda(n+1)2^{2\xi (n+1)} -  \lambda(n)2^{2n\xi },
	$
	and
	$
		\sup_{0 \le k \le 2^n-1}|\theta_{n,k}| \ge 2^{(\xi-1/2)n}\sqrt{\lambda(n+1)2^{2\xi }-\lambda(n)}.
	$
	As in \eqref{truncation eq}, let $x_n$
		 denote the $n^\text{th}$ truncation of $x$, then
	\begin{equation*}
		\max_{0\le t\le1}|x_{n+1}-x_n| = 2^{-n/2}\sup_{0 \le k \le 2^n-1}|\theta_{n,k}| \ge  2^{(\xi-1) n}\sqrt{\lambda(n+1)2^{2\xi}-\lambda(n)},
	\end{equation*}
	Since $\lambda$ is a subexponential function, the rightmost term tends to infinity as $n \ua \infty$. Thus, the series of truncated functions $(x_n)_{n \in \bN}$ will not converge. This contradicts the uniform convergence of $x_n$  to~$x$. Thus, we must have $\xi \le 1$, and this completes the proof.\end{proof}

	\begin{proof}[Proof of \Cref{Thm_main}] By the argument used in the proof of Lemma 3.1 of~\cite{MishuraSchied2}, we may assume without loss of generality that $x(0)=x(1)=0$.
		We start by proving  the \lq\lq only if"  direction in the first part of the assertion and assume that the function $x$ admits the roughness exponent $R$. For simplicity, we use the short-hand notion $r_n:= \wh R_n(x)$ in this and following proofs of this paper. We must prove that $ r_n \to R$. For any $p > 1/R \ge 1$, \Cref{Proposition_Burk} and condition \eqref{Thm_main cond} yield that there exists $A_p > 0$ such that 
		\begin{equation}\label{eq p-2 var}
			A_p2^{n(1-pr_n-\log_2\varrho_p(n)/n)} = A_p\varrho_p^{-1}(n)2^{n(1-p)}s^p_n = A_p\varrho_p^{-1}(n)2^{n(1-p)}\bE\Big[\sum_{m = 0}^{n-1}S_m\Big]^{p/2}\le \<x\>_n^{(p)} .
		\end{equation}
		Since $\limsup_n\<x\>_n^{(p)}  = 0$, we must thus have  $\liminf_{n}r_n > 1/p$. Sending $p\da1/R$ now gives $\liminf_n r_n \ge R$. In the same way, one proves that  $ \limsup_n r_n \le R$.
		
		To prove the \lq\lq if"  direction, let us assume that the finite limit  $\lim_n r_n = r$ exists. Then $r \in [0,1]$ by Lemma \ref{lemma_Range}, and this yields
		 \begin{equation*}
			\lim_{n \ua \infty}n\Big(1-pr_n\pm\frac1n\log_2\varrho_p(n)\Big) = \begin{cases}
				+\infty &\quad \text{if} \quad p < 1/r,\\
				-\infty &\quad \text{if} \quad p > 1/r.
			\end{cases}
		\end{equation*}
		Applying condition \eqref{Thm_main cond} to \eqref{eq_burkholder} and taking $n \ua \infty$ implies that $\lim_n \<x\>_n^{(p)}  = \infty$ for $p < 1/r$ and $\lim_n \<x\>_n^{(p)}  = 0$ for $p > 1/r$; Therefore, we must have $R = r$. This completes the proof.
		
		Let us now prove the converse assertion by verifying its contrapositive statement. To this end, we first assume that condition \eqref{reverse Jensen condition} does not hold with $p = 1/r > 2$, which implies that there exist $\varepsilon > 0$ and a subsequence $(n_k)_{k \in \bN}$ such that 
		\begin{equation*}
			\<x\>^{(p)}_{n_k} \ge 2^{n_k(1-p/2+\varepsilon)}\left(\<x\>^{(2)}_{n_k}\right)^{p/2} = 2^{n_k(1-p+\varepsilon)}\left(s_{n_k}^2\right)^{p/2} = 2^{n_k(1-pr_{n_k}+\varepsilon)}.
		\end{equation*}
		As $r_n \rightarrow r = 1/p$, then $pr_{n_k} \le 1 + \varepsilon/2$ for sufficiently large $k$. Thus, there exists $k_0 \in \bN$ such that for $k \ge k_0$, we get $\<x\>^{(p)}_{n_k} \ge 2^{\varepsilon n_k/2}$. Let $\delta > 0$ be such that $\delta/(p+\delta) < \varepsilon/4$, then it follows from \eqref{eq Reverse Jensen variation 2} that 
		\begin{equation*}
			\<x\>^{(p+\delta)}_{n_k} \ge \left(2^{n_k\left(\frac{p}{p+\delta}-1\right)}\<x\>^{(p)}_{n_k}\right)^{\frac{p+\delta}{p}} \ge 2^{\frac{p+\delta}{p}\left(\frac{\varepsilon}{2}-\frac{\delta}{p+\delta}\right)n_k} \ge 2^{\frac{(p+\delta)\varepsilon}{4p} n_k }.
		\end{equation*}
		The above inequality implies $\limsup_{n} \<x\>^{(p+\delta)}_{n} = \infty$, and \Cref{Prop_InfSup} indicates that $x$ must admit the roughness exponent $R \le (p+\delta)^{-1} < r$. The proof for the case $p = 1/r < 2$ is analogous, and condition \eqref{reverse Jensen condition} holds trivially at $p = 2$. This completes the proof.
	\end{proof}

The following proposition provides two alternative conditions on the Faber--Schauder coefficients of an arbitrary continuous function~$x$ that may be easier to verify than the reverse Jensen condition but are also slightly stronger. 	
	
	\begin{proposition}\label{Conditions prop}
	Consider the following two conditions on the Faber--Schauder coefficients of $x$.
	\begin{enumerate}
			\item \label{Condi_a} There exists an non-decreasing subexponential function $\gamma_1$ such that for sufficiently large $m$,
			\begin{equation*}
				{\max_{k=0,\dots, 2^m-1}|\theta_{m,k}|}\le \gamma_1(m){\min_{k=0,\dots, 2^m-1}|\theta_{m,k}|} .
			\end{equation*}
			\item \label{Condi_b} There exist $\nu \in \bN_0$ and an non-decreasing subexponential function $\gamma_2 : \bN \rightarrow \bR$ such that for sufficiently large $m \ge \nu$, 
			\begin{equation*}
				\max_{k=0,\dots,2^{m-\nu}-1}\sum_{j = 2^{\nu}k}^{2^{\nu}(k+1)-1}\theta^2_{m,j} \le \gamma_2 (m) \min_{k=0,\dots,2^{m-\nu}-1}\sum_{j = 2^{\nu}k}^{2^{\nu}(k+1)-1}\theta^2_{m,j}.
			\end{equation*}
						\end{enumerate}
Then  \ref{Condi_a} implies \ref{Condi_b}, and \ref{Condi_b} implies the reverse Jensen condition \eqref{reverse Jensen condition}.	\end{proposition}

\begin{proof}
		\ref{Condi_a} $\Rightarrow$ \ref{Condi_b}: If condition \ref{Condi_a} holds, then \ref{Condi_b} holds with $\nu = 0$ and $\gamma_2  = \gamma_1^2$.
		
		\ref{Condi_b} $\Rightarrow$ \eqref{reverse Jensen condition}: For the ease of notation, we  prove this implication only for the case $\nu = 1$; the case $\nu \ge 2$ can be proved analogously. Recall that the upper bound in the reverse Jensen condition holds with $\varrho_p\equiv1$  if $q:=p/2\le 1$. So it is sufficient to deduce this upper bound for $q>1$. We will use the probabilistic formalism introduced in \eqref{eq_Sm_Def}. Note first that 
		$$\frac1{2^{n-1}} \sum_{k=0}^{2^{n-1}-1}\Big(\sum_{m=0}^{n-1}2^m\theta^2_{m,\lfloor 2^{m-n}k\rfloor}\Big)^{p/2}=\bE\bigg[\bigg(\sum_{m=0}^{n-1}S_m\bigg)^{q}\bigg].
		$$
		Let $a_0,\dots, a_{n-1}\ge0$ and note  that  
		$(\sum_ia_i)^q\le n^{q-1}\sum_ia_i^q$  by Jensen's inequality. Hence,
				\begin{align*}
			\bE\bigg[\bigg(\sum_{m=0}^{n-1}S_m\bigg)^{q}\bigg]
			&\le n^{q-1}\bE\bigg[\sum_{m=0}^{n-1}S_m^{q}\bigg]= n^{q-1} \sum_{m = 0}^{n-1}2^{-m}\sum_{k = 0}^{2^m-1}2^{mq}|\theta_{m,k}|^{2q}
\\
&\le  2^{q-1}n^{q-1}\bigg(|\theta_{0,0}|^{2q}+ \sum_{m = 1}^{n-1}2^{-m}\sum_{k = 0}^{2^{m-1}-1}\Big(2^m\theta^2_{m,2k}+2^m\theta^2_{m,2k+1}\Big)^{q}\bigg)
\\&\le  2^{q-1}n^{q-1} \bigg(|\theta_{0,0}|^{2q}+  \sum_{m =1}^{n-1}\frac{1}{2}\Big(\max_k(2^m\theta^2_{m,2k}+2^m\theta^2_{m,2k+1})\Big)^{q}\bigg)
\\&\le  2^{q-1}n^{q-1}  (\gamma_2(n)) ^{q} \bigg(|\theta_{0,0}|^{2q}+  \sum_{m =1}^{n-1}\frac{1}{2}\Big(\min_k(2^m\theta^2_{m,2k}+2^m\theta^2_{m,2k+1})\Big)^{q}\bigg)\\
&\le  2^{q-1}n^{q-1}  (\gamma_2(n)) ^{q}\sum_{m = 0}^{n-1}\bE[S_m]^{q}\le  2^{q-1}n^{q-1} (\gamma_2(n)) ^{q}\bigg(\sum_{m=0}^{n-1}\bE[S_m]\bigg)^{q}.
		\end{align*}
Selecting $\varrho_p(n):= 2^{p/2-1}n^{p/2-1} (\gamma_2(n)) ^{p/2}$ and applying \eqref{eq S_m power} on the above long inequality then yields upper bound in \eqref{reverse Jensen condition}.		
For $0<q<1$, an analogous reasoning as above yields that		\begin{align*}
		\bE\bigg[\bigg(\sum_{m=0}^{n-1}S_m\bigg)^{q}\bigg] &\ge
		n^{q-1}2^{q-1}(\gamma_2(n))^{-q}\sum_{m =
		0}^{n-1}\bE[S_m]^q\ge\frac1{\varrho_p(n)}s_n^p,		\end{align*} where
		$\varrho_p(n):=n^{1-p/2}2^{1-p/2}(\gamma_2(n))^{p/2}$. This establishes
		the lower bound in the reverse Jensen condition, and we complete our
		proof. \end{proof}
		
		\section{The connection with weighted quadratic variation}\label{weighted QV section}
	
The Gladyshev estimator has its origins in Gladyshev's theorem~\cite{Gladyshev}, which is a limit theorem for the weighted quadratic variation of fractional Brownian motion. In this section, we discuss the relations between $\wh R_n(x)$ and the weighted quadratic variation of an arbitrary function $x\in C[0,1]$.  We start with the following proposition, which states general implications. Its part (b) introduces the estimator $\wh R^*_n$, whose simplified form can sometimes make the analysis easier.

\begin{proposition}\label{weighted QV prop}
Let $x \in C[0,1]$ have the Faber--Schauder coefficients $\theta_{m,k}$.
\begin{enumerate}
\item\label{weighted QV prop part a}
 The following conditions are equivalent. 
	\begin{enumerate}[{\rm (i)}]		\item\label{weighted QV prop part i} There exists $r_1\in(0,1)$ such that the weighted quadratic variation $2^{n(2r_1-1)}\<x\>^{(2)}_n$ converges to the limit $\ell_1\in(0,\infty)$.
				\item\label{weighted QV prop part ii} There exists $r_2\in(0,1)$ such that  $2^{n(2r_2-2)}s_n^2$ converges to the  limit $\ell_2\in(0,\infty)$.
						\item There exists $r_3\in(0,1)$ such that $2^{n(2r_3-2)}(s_{n+1}^2-s_n^2)$ converges to the  limit $\ell_3\in(0,\infty)$.
		\end{enumerate}
	In this case, we have $r:=r_1=r_2=r_3$ and $\ell_1= \ell_2 = (2^{2-2r}-1)^{-1}\ell_3$.
	\item\label{weighted QV prop part b} The equivalent conditions in part \ref{weighted QV prop part a}
imply that $\lim_n\wh R^*_n(x)=r$, where
$$\wh R^*_n(x):=1-\frac1n\log_2\sqrt{\sum_{k = 0}^{2^n-1}\theta_{n,k}^2}.
$$
\item\label{weighted QV prop part c} If $\wh R^*_n(x)$ converges to the finite limit $r$, then also $\lim_n\wh R_n(x)=r$.
	\end{enumerate}
\end{proposition}

\begin{proof}[Proof of \Cref{weighted QV prop}] \ref{weighted QV prop part a}: To see the equivalence of (i) and (ii) as well as $r_1=r_2$ and $\ell_1=\ell_2$, one argues first as in the proof of~\cite[Proposition 2.1]{MishuraSchied} that one can assume without loss of generality that $x(0)=x(1)=0$. Then the claim follows from \eqref{QV eq}. To see that (ii) implies (iii) with $r_3=r_2$ and $ \ell_2 = (2^{2-2r_2}-1)^{-1}\ell_3$, we let $b_n = 2^{n(2-2r_2)}$. Then $s_n^2/b_n \rightarrow \ell_2$ by assumption.  It follows from~\cite[Lemma 3.1]{MishuraSchied} that 
	\begin{equation}\label{Stolz Cesaro eq}
	\dfrac{2^{(2r_2-2)n}}{2^{2-2r_2}-1}\big(s_{n+1}^2-s_n^2\big)=\frac{s_{n+1}^2 - s_n^2}{b_{n+1} - b_n}  \longrightarrow \ell_2 \quad \text{as} \quad  n \ua \infty.
	\end{equation}
For the proof that (iii) implies (ii) with $ \ell_2 = (2^{2-2r_2}-1)^{-1}\ell_3$ and $r_2=r_3$, it suffices to note that the convergence \eqref{Stolz Cesaro eq}
 implies that $s_n^2/b_n\to\ell_2$ by means of the Stolz--Cesaro theorem in the form of~\cite[Theorem 1.22]{Muresan}.

\ref{weighted QV prop part b}: Using (iii) yields that $r-\wh R_n^*(x)= (r-1)+\frac{1}{2n}\log_2(s^2_{n+1}-s^2_n) = \frac1{2n}\log_2(2^{n(2r-2)}(s_{n+1}^2-s_n^2))\to0$.
  
 \ref{weighted QV prop part c}:  Let $\eps\in(0,2-2r)$ be given. By part \ref{weighted QV prop part b}, there is a constant $c>0$ such that $c^{-1}2^{(2-2r-\eps)n}\le(s_{n+1}^2-s_n^2)\le c2^{(2-2r+\eps)n}$ for all $n\in\bN$. It follows that 
 $$\frac{2^{(2-2r-\eps)n}-1}{c(2^{2-2r-\eps}-1)}\le\sum_{k=0}^{n-1}\big(s_{k+1}^2-s_k^2\big)\le \frac {c(2^{(2-2r+\eps)n}-1)}{(2^{2-2r+\eps}-1)}.$$ 
Cancelling the terms in the telescopic sum, taking logarithms, dividing by $2n$, passing to the limit $n\ua\infty$, and sending $\eps$ to zero now yields the result.
\end{proof}

The following corollary follows immediately from \Cref{Thm_main} and \Cref{weighted QV prop}.

\begin{corollary}\label{Corollary QV}
	If $x \in C[0,1]$ satisfies the reverse Jensen condition, stated in \Cref{Definition reverse Jensen condition}, and one of the conditions in \ref{weighted QV prop part a}, \ref{weighted QV prop part b} or \ref{weighted QV prop part c} in \Cref{weighted QV prop} holds for some $r \in (0,1)$, then $x$ admits the roughness exponent $r$.
\end{corollary}

The implications \ref{weighted QV prop part c}$\Rightarrow$\ref{weighted QV prop part b} and 
\ref{weighted QV prop part b}$\Rightarrow$ \ref{weighted QV prop part a} in \Cref{weighted QV prop} are generally false. This is illustrated in the following example. 

\begin{example}\label{couterexample weight quadratic}	For given $R\in(0,1)$, we consider $x\in C[0,1]$ defined through the Faber--Schauder coefficients  $\theta_{m,k}=2^{m(1/2-R)}$ if $m$ is even and $\theta_{m,k}=0$ if $m$ is odd. Recall from \Cref{Takagi class rem} that $x$ satisfies the reverse Jensen condition. A straightforward computation gives 
	$$\wh R_n(x)=1-\frac1{2n}\log_2\frac{2^{4+4(1-R)\lfloor\frac{n-1}2\rfloor}-2^{4R}}{2^4-2^{4R}}\lra R.
	$$
	\Cref{Thm_main}  hence yields that $x$ has the roughness exponent $R$. However, $\wh R^*_n(x)$ clearly does not converge. It follows that also none of the quantities in part \ref{weighted QV prop part a}
	of \Cref{weighted QV prop} can converge.
	
	Now take again $R\in(0,1)$ and let $\theta_{m,k}=2^{m(1/2-R)}\sqrt{(2^{2(1-R)}-1)g(m)}$, where the function $g$ is strictly positive and regularly varying with index $\varrho$. It was shown in Corollaries 3.6 and 3.11 of~\cite{HanSchiedZhang2} that the corresponding function $x$ has the roughness exponent $R$ and that the  $(1/R)^{\rm th}$ variation of $x$ is zero for $\varrho<0$ and infinite for $\varrho>0$. If $g$ is slowly varying, so that $\varrho=0$, then $\liminf_n \<x\>_n^{(1/R)}$  and $\limsup_n \<x\>_n^{(1/R)}$ can take arbitrary values in $[0,\infty]$. We clearly have
	$\wh R^*_n(x)=R-\frac1{2n}\log_2\big((2^{2-2R}-1)g(n)\big)\to R$.
	However, $2^{n(2R-2)}(s_{n+1}^2-s_n^2)=(2^{2-2R}-1)g(n)$
	does not converge to a finite limit unless $g(n)$ does. This shows that the implication \ref{weighted QV prop part b}$\Rightarrow$\ref{weighted QV prop part a} in \Cref{weighted QV prop} does not hold.
\end{example}

As announced above, we now present an example of a continuous function $x$ for which the weighted quadratic variation  $2^{n(2\gamma-1)}\<x\>_n^{(2)}$ converges to a finite limit for some  $\gamma \in (0,1)$, but which nevertheless admits a different roughness exponent $R$. Note that  \Cref{weighted QV prop} implies that $\wh R_n(x)\to \gamma$. In other words, the limit of $\wh R_n(x)$ is different from the function's roughness exponent. Hence, $x$ cannot satisfy the reverse Jensen condition. The function $x$ therefore also highlights the importance of the reverse Jensen condition in \Cref{Thm_main}.

\begin{proposition}\label{remark Jensen counter}
	Let $\alpha \in (0,1]$ and $R \in (0,1)$. Then the function 
	\begin{equation*}
		x = \sum_{m = 0}^\infty 2^{\left(\frac{1}{2}-\alpha R\right)m}\sum_{k = 0}^{\floor{2^{\alpha m}}-1} e_{m,k}
	\end{equation*}
	admits the roughness exponent $R$. On the other hand, let $\gamma:= \alpha R + (1-\alpha)/2$, then
	\begin{equation}\label{eq weight qv}
		\lim_{n \ua \infty} 2^{n(2\gamma - 1)}\<x\>^{(2)}_n = \left(2^{2 - 2\gamma} - 1\right)^{-1} \quad \text{and} \quad \lim_{n \ua \infty}\wh R_n(x) = \gamma.
	\end{equation}
\end{proposition}

It is worthwhile to point out that $\gamma = \alpha R + (1-\alpha)/2$ is the convex combination of $R$ and $1/2$. If $R > 1/2$, we have $R \ge \gamma > 1/2$ and vice versa. In fact, such a relation holds universally as later shown in \Cref{Prop_Bias}.

\begin{proof}[Proof of \Cref{remark Jensen counter}]
	We begin with proving \eqref{eq weight qv}. To this end, we verify condition (iii) in \Cref{weighted QV prop} \ref{weighted QV prop part a}:
	\begin{equation*}
		\lim_{n \ua \infty}2^{n(2\gamma-2)}\sum_{k = 0}^{2^n-1}\theta^2_{n,k} = \lim_{n \ua \infty}2^{n\left((2\gamma-2)+(1-2\alpha R)\right)}\floor{2^{\alpha n}} = 1.
	\end{equation*}
	\Cref{weighted QV prop} then yields that $\lim_n \wh R_n(x) = \gamma$. 
	
	Next, we aim to show that $x$ admits the roughness exponent $R$. For a fixed $\alpha \in (0,1]$, let $n_0 \in \bN$ be such that $\floor{2^{\alpha n}}/2 \le \floor{2^{\alpha (n-1)}}-1/2$ for $n \ge n_0$. Let 
	\begin{equation*}
		\wt x := \sum_{m = n_0}^\infty 2^{\left(\frac{1}{2}-\alpha R\right)m}\sum_{k = 0}^{\floor{2^{\alpha m}}-1} e_{m,k}.
	\end{equation*}
		Based on the argument presented in the proof of Lemma 3.1 in \cite{MishuraSchied2},  the functions $x$ and $\widetilde{x}$ share the same roughness exponent. Hence, it will be sufficient to henceforth consider the function $\widetilde{x}$. Recalling the definition of $V_m$ and $S_m$ in \eqref{eq_Sm_Def}, it is clear that for $m \ge n_0$ and $2^mV_m \ge \floor{2^{\alpha m}}$, we have $S_m = 2^{m(2-2\alpha R)}$, and otherwise zero.  Clearly, 
	$
	2^{m-1}V_{m-1}  \le 2^{m-1}V_m$. Therefore, for $m \ge n_0$, if $2^{m}V_m \le \floor{2^{\alpha m}}-1$, then 
	\begin{equation}\label{eq prob lower}
		2^{m-1}V_{m-1} \le \frac{2^{m}V_m}{2} \le \frac{1}{2}\left(\floor{2^{\alpha m}}-1\right) \le \floor{2^{\alpha (m-1)}}-1.
	\end{equation}
	In other words, if $S_m = 2^{m(2-2\alpha R)}$, then $S_{m-1} = 2^{(m-1)(2-2 \alpha R)}$. On the other hand, for $m \ge n_0$, if $2^{m}V_m \ge \floor{2^{\alpha m}}$, then 
	\begin{equation}\label{eq prob upper}
		2^{m+1}V_{m+1} \ge 2^{m+1}V_m \ge 2\floor{2^{\alpha m}} \ge 2\floor{2^{\alpha m}} - 1 \ge \floor{2^{\alpha (m+1)}}.
	\end{equation}
	Equivalently speaking, if $S_m = 0$, then $S_{m+1} = 0$. For $n \ge n_0 + 3$, it then follows from \eqref{eq prob lower} and \eqref{eq prob upper} that if there exists $n_0 + 1 \le m \le n-2$ such that $2^mV_m \le \floor{2^{\alpha m} }-1$ and $2^{m+1}V_{m+1} \ge \floor{2^{\alpha (m+1)}}$, we have $S_k = 2^{k(2-2\alpha R)}$ for $n_0 \le k \le m$ and $S_k =0$ for  $m + 1 \le k \le n-2$. Therefore, for $n \ge n_0 + 3$, the distribution of the random variable $\sum_{m = 0}^{n-1}S_m$ is as follows, 
	\begin{align*}
		\bP\bigg(\sum_{m = 0}^{n-1}S_m = \sum_{m = n_0}^{n-1}2^{m(2-2\alpha R)}\bigg)= \frac{\floor{2^{\alpha (n-1)}}}{2^{n-1}},\qquad \bP\bigg(\sum_{m = 0}^{n-1}S_m = 0\bigg) = 1 - \frac{\floor{2^{\alpha n_0}}}{2^{n_0}},
	\end{align*}
	and 
	\begin{align*}
		\bP\bigg(\sum_{m = 0}^{n-1}S_m = \sum_{m = n_0}^{k-1}2^{m(2-2\alpha R)}\bigg)  &= \frac{\floor{2^{\alpha(k-1)}}}{2^{k-1}} - \frac{\floor{2^{\alpha k}}}{2^k}\qquad \text{for $n_0+1 \le k \le n-1$.}
	\end{align*} Therefore, for $n \ge n_0 + 3$ and $p \ge 1$, 
	\begin{equation*}
		\begin{split}
			\bE\bigg[\bigg(\sum_{m = 0}^{n-1}S_m\bigg)^{p/2}\bigg] &= \frac{\floor{2^{\alpha (n-1)}}}{2^{n-1}}\bigg(\sum_{m = n_0}^{n-1}2^{m(2-2\alpha R)}\bigg)^{p/2}+\sum_{k = n_0+1}^{n-1}\bigg(\frac{\floor{2^{\alpha(k-1)}}}{2^{k-1}} - \frac{\floor{2^{\alpha k}}}{2^k}\bigg)\bigg(\sum_{m = n_0}^{k-1}2^{m(2-2\alpha R)}\bigg)^{p/2}.
		\end{split}
	\end{equation*}
	In the next step, we seek suitable upper and lower bounds for  this expression. To obtain  a lower bound, we use the following facts, 
	\begin{equation*}
		\frac{\floor{2^{\alpha (n-1)}}}{2^{n-1}} \ge 2^{(\alpha - 1)(n-1)} -2^{-(n-1)}  \quad \text{and} \quad \sum_{m = n_0}^{n-1}2^{m(2-2\alpha R)} \sim \frac{2^{n(2-2\alpha R)}}{2^{2-2\alpha R}-1} \quad \text{as} \quad n \ua \infty.
	\end{equation*}
	Hence, there exists a positive constant $c_1$ such that for sufficiently large $n$, 
	\begin{equation}\label{remark Jensen counter lower bound eq}
		\bE\bigg[\bigg(\sum_{m = 0}^{n-1}S_m\bigg)^{p/2}\bigg] \ge \left( 2^{(\alpha - 1)(n-1)} -2^{-(n-1)}\right)\bigg(\sum_{m = n_0}^{n-1}2^{m(2-2\alpha R)}\bigg)^{p/2} \ge c_1 2^{n(p(1-\alpha R)-(1-\alpha))}.
	\end{equation}
	To obtain the upper bound, we apply the inequality $\floor{2^{\alpha(n-1)}}/2^{n-1} \le 2^{(\alpha - 1)(n-1)}$ and Jensen's inequality, 
	\begin{equation*}
			\bE\bigg[\bigg(\sum_{m = 0}^{n-1}S_m\bigg)^{p/2}\bigg] \le \big(n^{p/2-1}\vee 1\big)\sum_{k = n_0+1}^{n}2^{(\alpha - 1)(k-1)}\bigg(\sum_{m = n_0}^{k-1}2^{mp(1-\alpha R)}\bigg).
	\end{equation*}
	As $p(1 - \alpha R) > 1 - \alpha \ge 0$, the following asymptotics hold as $n \ua \infty$, 
	\begin{equation*}
		\sum_{k = n_0+1}^{n}2^{(\alpha-1)(k-1)}\sum_{m = n_0}^{k-1}2^{mp(1-\alpha R)} = O \big(2^{n(p(1-R)-(1-\alpha))}\big).
	\end{equation*}
	Thus, for each $p \ge 1$, there exists a positive constant $c_2$ such that for sufficiently large $n$, 
	\begin{equation}\label{remark Jensen counter upper bound eq}
		\bE\bigg[\bigg(\sum_{m = 0}^{n-1}S_m\bigg)^{p/2}\bigg] \le c_2\left(n^{p/2-1}\vee 1 \right)2^{n(p(1-\alpha R)-(1-\alpha))}.
	\end{equation}
	It then follows from \eqref{remark Jensen counter lower bound eq} and \eqref{remark Jensen counter upper bound eq} that there exists a positive constant $c_p$ depending on $p$ such that for sufficiently large $n$, 
	\begin{equation*}
		\frac{1}{c_p} 2^{n(p(1-\alpha R)-(1-\alpha))} \le \bE\bigg[\bigg(\sum_{m = 0}^{n-1}S_m\bigg)^{p/2}\bigg] \le c_p\left(n^{p/2-1}\vee 1 \right)2^{n(p(1-\alpha R)-(1-\alpha))}.
	\end{equation*}
	\Cref{Proposition_Burk} now yields that for each $p \ge 1$, there exists $n_0 \in \bN$, such that for $n \ge n_0$, 
	\begin{equation*}
		\frac{A_p}{c_p} 2^{\alpha n(1-pR)} \le \<\wt x\>^{(p)}_n \le c_pB_p 2^{\alpha n(1-pR)}.
	\end{equation*}
	 Taking $n \ua \infty$ on both sides of the above inequalities gives
	\begin{equation*}
		\lim_{n \ua \infty}\<\wt x\>^{(p)}_n = \begin{cases}
			0 \quad &\text{for} \quad p > \frac{1}{R},\\
			\infty \quad &\text{for} \quad p < \frac{1}{R}.\\
		\end{cases}
	\end{equation*}
	Thus, $\wt x$ admits the roughness exponent $R$, which completes the proof.
\end{proof}
		
\section{Application to fractional Brownian motion with drift}\label{Gaussian examples section}

Let $(B^H_t)_{t \in [0,1]}$ be a fractional Brownian motion with Hurst parameter $H\in(0,1)$, defined on the probability space $(\wt \Omega,\cG,\bQ)$. Let $X^H$ be given by 
\begin{equation}\label{eq sde fbm}
	X^H_t := x_0 + B^H_t + \int_{0}^{t}\zeta_s \,ds \qquad \text{for} \quad 0 \le t \le 1,
\end{equation}
where $\zeta$ is progressively measurable with respect to the natural filtration of $B^H$ and satisfies the following additional assumptions.
If $H\le1/2$, we assume  that $t\mapsto\zeta_t$ is $\bQ$-a.s.~bounded in the sense that  there exists a finite random variable $C$ such that $\zeta_t(\om)\le C(\om)$ for a.e.~$t$ and $\bQ$-a.s.~$\om\in\wt \Omega$. 
	 If $H>1/2$, we assume that $t\mapsto \zeta_t$ is $\bQ$-a.s.~H\"older continuous with some exponent $\alpha>2H-1$.
This class of stochastic processes includes solutions of stochastic integral equations of the form
$X^H_t=x_0+B^H_t+\int_0^t b(X^H_s)\,ds$ 
 for some locally bounded and measurable function  $b:\bR\to\bR$ that, for $H>1/2$, is locally H\"older continuous with some exponent $\alpha>2-1/H$. One particular example is the  fractional Ornstein--Uhlenbeck process, which was proposed by Gatheral et al.~\cite{GatheralRosenbaum} as a suitable model for log-volatility.

\begin{theorem}\label{Theorem example FBM}
	For all $H\in(0,1)$, almost all sample paths  of the process $X^H$   satisfy  the reverse Jensen condition and admit the roughness exponent $H$. Moreover, the estimator $\wh R_n$ is strongly consistent and there exists a positive constant $\kappa$ such that 
	\begin{equation}
	\limsup_{n\ua\infty}\frac{|\wh R_n(X^H) - H| }{\eps_n}\le \kappa\qquad\text{$\bQ$-a.s.,}
	\end{equation}
	where 
	\begin{equation}\label{eq rate ori path}
		\eps_n= \begin{cases}
			2^{-n/2}n^{-1}\sqrt{\log n} &\text{if $H \in (0,\frac{1}{2})$,} \\
			2^{-n/2}n^{-1/2}\sqrt{\log n} &\text{if $H = \frac{1}{2}$,}\\
			2^{(H-1)n}n^{-1}\sqrt{\log n} &\text{if $ H \in (\frac{1}{2},1)$.} 
		\end{cases} 
	\end{equation}
\end{theorem}

For fractional Brownian motion with Hurst parameter $H\le1/2$, it follows from Theorem 1.2 in~\cite{marcus1992p} that $ \lim_n
\<B^H\>_n^{(1/H)}=\pi^{-1/2}{2^{1/(2H)}\Gamma(\frac{H+1}{2H})}$  $\bQ$-a.s. As discussed at the beginning of \Cref{general section}, this implies  that $B^H$ admits $\bQ$-a.s.~the roughness exponent $H$. So this part of the assertion of \Cref{Theorem example FBM} was known beforehand. For $H>1/2$,  however,  we were only able to find references in the literature that yield convergence of $\<B^H\>_n^{(1/H)}$ in probability  (see, e.g.,  Section 1.18 in~\cite{Mishura} and the references therein), which is not quite sufficient to conclude that almost all trajectories of $B^H$ admit the roughness exponent~$H$.

The rate of convergence for $\wh R_n(B^H)$  was initially studied in~\cite[Theorem~1]{KubiliusMelichov2010} for $H >1/2$. However, as pointed out in our communication~\cite{KestutisCommunication} with the authors, the convergence rate stated in~\cite{KubiliusMelichov2010} contains a typo. By modifying and streamlining the original proof idea from~\cite{KubiliusMelichov2010}, our \Cref{Theorem example FBM} provides the correct rate, states a stronger convergence result,  extends it to the cases $H=1/2$ and $H<1/2$, and also covers the case of fractional Brownian motion with absolutely continuous drift. We start with the following proposition, which provides the speed of convergence in Gladyshev's theorem~\cite{Gladyshev} and is of possible independent interest. 

\begin{proposition}\label{Gladyshev speed prop} For $\eps_n$ as in \eqref{eq rate ori path} and any $H\in (0,1)$, there exists a constant $c$ such that 
$$\limsup_{n\ua\infty}\frac{\big|2^{n(2H-1)}\<B^H\>_n^{(2)}-1\big|}{n\eps_n}\le c\qquad\text{$\bQ$-a.s.}
$$
\end{proposition}

\begin{proof}Let $Z_n $ be the $2^n$-dimensional centered Gaussian vector  with components
	$		Z_n^k = 2^{n(H-1/2)} (B^H_{{k}{2^{-n}}}-B^H_{(k-1){2^{-n}}} )$. Then $\norm{Z_n}^2_{\ell_2} =2^{n(2H-1)}\<B^H\>^{(2)}_n$. 
		Denote by $\Gamma_n$ the covariance matrix of  $Z_n $ and by $T_n$ its trace. According to the following concentration inequality from~\cite[Lemma 3.1]{BaudoinHairer},  there exists a constant $\lambda$ such that for all $n$ and each $\delta>0$,
	\begin{equation*}
		\bQ\left[\big|\norm{Z_n}_{\ell_2} - T_n\big| \ge \delta \right] \le \lambda \exp\Big(-\frac{\delta^2}{4\norm{\Gamma_n}_2}\Big),
	\end{equation*}
	where $\norm{\Gamma_n}_2$ is the spectral norm of $\Gamma_n$.  It was shown in~\cite[Equation (15)]{KleinGine} that $\trace \Gamma_n = 1$, and $\norm{\Gamma_n}_2 \le \gamma b_n$, where $\gamma$ is a positive constant and $b_n=n^2\eps_n^2/\log n$.
	Taking $\delta_n:=2\sqrt{2\gamma b_n\log n}=n\eps_n\sqrt{8\gamma}$ yields $\bQ [|\norm{Z_n}_{\ell_2} - 1| \ge \delta_n]\le\lambda/n^2$, which is summable.  The Borel--Cantelli lemma hence implies that $|\norm{Z_n}_{\ell_2} - 1| \le\delta_n$ eventually with probability one, and so 
	$$\limsup_{n\ua\infty}\frac{\big|\sqrt{2^{n(2H-1)}\<B^H\>_n^{(2)}}-1\big|}{\delta_n}\le1\qquad\text{$\bQ$-a.s.}
$$
Using the fact that $\sqrt{1+x}-1\sim x/2$ for $x\to0$ now yields the result.
\end{proof}

\begin{proof}[Proof of \Cref{Theorem example FBM}]
	 It is shown in~\cite{HanSchiedFBM} that that law of $(X^H_t)_{t \in [0,1]}$ is absolutely continuous with respect to the law of $(x_0 + B^H_t)_{t \in [0,1]}$. Hence, it suffices to prove the  assertion for fractional Brownian motion.

	We start by establishing the reverse Jensen condition. Consider the probability space $(\Om,\cF,\bP)$ supporting the random variables $( V_m)$ from \eqref{eq_Sm_Def}.  Then $(S_m)_{m\in\bN} = (2^m\theta^2_{m,2^mV_m})_{m\in\bN}$
	is a sequence of random variables defined on  the product space $(\Omega \times \wt \Omega, \cF\otimes\cG, \bP \otimes \bQ)$.
	According to \eqref{Thm_main cond}, we must show that for every $p\ge1$, with $\bQ$-probability one,   there exists a subexponential function $\varrho_p$ and $n_p\in\bN$ such that for $n\ge n_p$,
	\begin{equation}\label{Theorem example FBM to show}
		\frac1{\varrho_p(n)} \bE\Big[2^{2n(H-1)}\sum_{m=0}^{n-1}S_m\Big]^{p/2}\le \bE\bigg[\bigg(2^{2n(H-1)}\sum_{m=0}^{n-1}S_m\bigg)^{p/2}\bigg]\le\varrho_p(n) \bE\Big[2^{2n(H-1)}\sum_{m=0}^{n-1}S_m\Big]^{p/2},
	\end{equation}
	where the notation $ \bE$ denotes as usual the expectation with respect to $\bP$. 
	To this end, we note that $\bQ$-a.s.~as $n \ua \infty$,
	\begin{equation}\label{Gladyshev conv eq}
		\bE\Big[2^{2n(H-1)}\sum_{m=0}^{n-1}S_m\Big]^{p/2}= \bigg(2^{n(2H-1)}2^{-n}\sum_{m = 0}^{n-1}\sum_{k = 0}^{2^m-1}\theta^2_{m,k}\bigg)^{p/2}=\big(2^{n(2H-1)}\<B^H\>^{(2)}_n\big)^{p/2}\lra 1,
	\end{equation}
	where the second identity follows from \eqref{QV eq} and the convergence from  \Cref{Gladyshev speed prop}.
	The self-similarity of fractional Brownian motion implies that 
	\begin{equation}\label{Rogers identity}
		Y_n:=2^{n(pH-1)}\<B^H\>_n^{(p)}=2^{n(pH-1)}\sum_{k=0}^{2^n-1}\big|B^H_{(k+1)2^{-n}}-B^H_{k2^{-n}}\big|^p\sim 2^{-n}\sum_{k=0}^{2^{n}-1}\big|B^H_{k+1}-B^H_k\big|^p,\end{equation}
	where $\sim$ means equality in law. 	
	Since the increments $B^H_{k+1}-B^H_k$ are standard normally distributed, the $\bQ$-expectation of  $Y_n$ is bounded from above by a finite constant $c$, uniformly in $n$.
	It follows that $\bQ[Y_n>n^2]\le c/n^2$, and a Borel--Cantelli argument yields that $Y_n\le n^2$  for sufficiently large $n$, $\bQ$-a.s. In conjunction with \eqref{eq_burkholder}	and \eqref{Gladyshev conv eq},  this establishes the right-hand inequality in \eqref{Theorem example FBM to show} with $\varrho_p(n)=2n^2/A_p$.
	For the left-hand inequality, we take $q>p$ and note that by \eqref{Rogers identity}
	and Jensen's inequality,
	\begin{align*}
		Y_n^{-q}\sim \bigg(2^{-n}\sum_{k=0}^{2^{n}-1}|B^H_{k+1}-B^H_k|^p\bigg)^{-q}\le 2^{-n}\sum_{k=0}^{2^{n}-1}|B^H_{k+1}-B^H_k|^{-p/q} .\end{align*}
	Since the increments $B^H_{k+1}-B^H_k$ are standard normally distributed, the $\bQ$-expectations of $Y_n^{-q}$ are also bounded from above, uniformly in $n$, as $-p/q > -1$. A similar Borel--Cantelli argument as above then yields that $Y_n\ge 1/n^{2q}$  for sufficiently large $n$, $\bQ$-a.s., and we obtain the left-hand inequality in \eqref{Theorem example FBM to show} with $\varrho_p(n)=2B_pn^{2q}$. Thus, both inequalities in \eqref{Theorem example FBM to show}  hold, e.g.,  with $\varrho_p(n):=2(A_p^{-1}\vee B_p)n^{2(p+1)}$. This establishes the reverse Jensen condition. 
	
	Next, it follows from Propositions \ref{Gladyshev speed prop} and \ref{weighted QV prop} that $\wh R_n(B^H)\to H$ $\bQ$-a.s., and so \Cref{Thm_main}  yields that the sample paths admit the roughness exponent $H$ with probability one.

	Finally,  we prove the rate of convergence \eqref{eq rate ori path}. Using \eqref{QV eq}, one checks that 
	$$H-\wh R_n(B^H)=\frac1{2n}\log_2 2^{n(2H-1)}\<B^H\>_n^{(2)}.
	$$
Now the assertion follows from 	\Cref{Gladyshev speed prop} and the fact that $\log_2(1+x)\sim x/\log2$ for $x\to0$.
\end{proof}

\section{The connection with Besov regularity}\label{Besov section}

In this section, we establish a connection between the roughness exponent and Besov regularity. We refer to~\cite[Section 2.3]{TriebelFunctionSpace} for the definition of the Besov space $\cB^{r}_{p,q}[0,1]$. Ciesielski et al.~\cite{Ciesielski1993}   provided a norm  expressed in terms of  Faber--Schauder coefficients, which is  equivalent to the traditional Besov norm. Rosenbaum~\cite{Rosenbaum2009} defined the following equivalent norm  on $\cB^{r}_{p,q}[0,1]$ for $p,q \in [1,\infty)$ and $r \in (1/p,1)$,
\begin{equation}\label{Rosenbaum norm}
\|x\|_{r,p,q}:=|x(0)|\vee\|(a_n)_{n\in\bN_0}\|_{\ell_q},\qquad \text{where}\qquad  a_n:=\big( 2^{n(pr-1)}\<x\>_n^{(p)}\big)^{1/p}.
\end{equation} 
Note that for $p=2$ the relation to weighted quadratic variation as studied in \Cref{weighted QV section} becomes apparent. Measuring the degree of roughness of a function $x \in [0,1]$ through Besov spaces was initially proposed by Eyink~\cite{EyinkBesov} through the condition
\begin{equation}\label{Eyink condition}
	x \in \bigcap_{\varepsilon > 0} \cB^{r - \varepsilon}_{p,\infty}[0,1]\setminus\bigcup_{\varepsilon > 0}\cB^{r + \varepsilon}_{p,\infty}[0,1].\
	\end{equation}
 
\begin{example}\label{Besov example 1}Consider the function $x$ defined in \Cref{remark Jensen counter} with $\alpha \in (0,1)$ and $R > 1/2$. The first identity in \eqref{eq weight qv} implies that $\|x\|_{\gamma,2,\infty}<\infty$ and that $\|x\|_{\gamma+\eps,2,\infty}=\infty$  for any $\eps>0$. Hence, condition \eqref{Eyink condition} holds for $r=\gamma$, $p=2$, and $q=\infty$. Nevertheless, as observed in \Cref{remark Jensen counter}, the roughness exponent of $x$ is different from $\gamma$, which shows that Eyink's concept of Besov regularity is generally different from our concept of roughness. 
\end{example}

In the rough volatility context~\cite{GatheralRosenbaum}, the parameter $r$ in \eqref{Eyink condition} is often assumed to be the same for all $p \ge 1$, which leads to the requirement
\begin{equation}\label{eq Besov 1}
	x \in \bigcap_{{\varepsilon > 0,\ p \ge 1}} \cB^{r - \varepsilon}_{p,\infty}[0,1]\setminus\bigcup_{{\varepsilon > 0,\  p \ge 1}}\cB^{r + \varepsilon}_{p,\infty}[0,1].
\end{equation}
In the following theorem, we explore the connection between this concept of Besov regularity and the roughness exponent. Let $q^\ast$ be as in \eqref{q+q- eqn}  and  $R^-:= 1/q^\ast$. Our result shows that the condition \eqref{eq Besov 1} implies that $r=R^-$. As a matter of fact, the condition \eqref{eq Besov cond} used in \Cref{Theorem Besov}
 is actually less restrictive than \eqref{eq Besov 1}. Note moreover that \Cref{remark Jensen counter} shows that \eqref{eq Besov 1} or \eqref{eq Besov cond} cannot be replaced with Eyink's condition \eqref{Eyink condition}. Indeed, the latter condition holds in \Cref{remark Jensen counter} for $r=\gamma$, while the function $x$ constructed there admits an arbitrary roughness exponent $R = R^-$.

\begin{theorem}\label{Theorem Besov}
	Suppose that there exists $r \in (0,1)$ such that 
	\begin{equation}\label{eq Besov cond}
		x \in \bigcap_{{\varepsilon > 0,\ p \ge 1}} \cB^{r - \varepsilon}_{p,\infty}[0,1]\setminus\bigcup_{{\varepsilon > 0,\  \frac{1}{r+\varepsilon} < p < \frac{1}{r}}}\cB^{r + \varepsilon}_{p,\infty}[0,1].
	\end{equation}
	Then $R^- = r$.
\end{theorem}
\begin{proof}
	We must show that $q^* = 1/r$, and we start with proving $q^* \le 1/r$. For $\gamma > 0$ and $0 < \varepsilon < r$, we choose $p:= (1+\gamma)/(r-\varepsilon)$. Then it is clear that $p(r-\varepsilon) = 1 + \gamma > 1$.Hence, 	$(\sup_{n}2^{\gamma n}\<x\>^{(p)}_n\big)^{1/p} =\|x\|_{r-\eps,p,\infty} < \infty$. 
	Since $\gamma > 0$, we get 
	$
		\limsup_{n}\<x\>^{(p)}_n < \infty
	$,
	which implies that $q^* \le p=(1+\gamma)/(r-\varepsilon)$. Taking $\gamma,\varepsilon \da 0$ yields $q^* \le 1/r$. 
	
	Next, let us show that $q^* \ge 1/r$. To this end, we assume by way of contradiction that $q^* < 1/r$. Let us denote $q = \frac{1}{2}(q^* + 1/r)$. Then there exists $\lambda > 0$ such that $q =1/(r + \lambda)$ as $q < 1/r$. Next, we set 
	\begin{equation*}
		\varepsilon := \frac{r}{8} - \frac{r^2q}{8} = \frac{r}{8}\left(1 - \frac{r}{r+\lambda}\right) = \frac{r\lambda}{8(r + \lambda)} < \lambda.
	\end{equation*} As $q^* < q$, we  have
	$		\limsup_{n \ua \infty}\<x\>^{(q)}_n = 0$.	Let $H^\alpha[0,1]$ denote the space of $\alpha$-H\"{o}lder continuous functions on $[0,1]$. It follows from a fundamental embedding result~\cite[Section 2.7.1]{TriebelFunctionSpace} that $\cB^r_{p,\infty}[0,1] \subset H^{r-1/p}[0,1]$, and this yields
	\begin{equation*}
		\bigcap_{{\varepsilon > 0,\ p \ge 1}}\cB^{r - \varepsilon}_{p,\infty}[0,1]\subset \bigcap_{\varepsilon > 0} H^{r-\varepsilon}[0,1].
	\end{equation*}
	Next, we take $p := 1/(r + \varepsilon/2)$. Then $p > q$ as $\varepsilon < \lambda$. Applying the above H\"older condition gives
	$	
			\<x\>^{(p)}_n\le \kappa 2^{(q-p)(r-\varepsilon)n}\<x\>^{(q)}_n$
	for some $\kappa > 0$. Thus, for $n \in \bN$, we get
	$	2^{n((r+\varepsilon)p-1)}\<x\>^{(p)}_n \le \kappa 2^{n((r+\varepsilon)p-1 + (q-p)(r-\varepsilon))}\<x\>^{(q)}_n
	$. One checks that $
			(r+\varepsilon)p-1 + (q-p)(r-\varepsilon) < 0$. 	Hence,
	\begin{equation*}
		\limsup_{n \ua \infty}2^{n((r+\varepsilon)p-1)}\<x\>^{(p)}_n \le \kappa \left(\limsup_{n \ua \infty}2^{n((r+\varepsilon)p-1 + (q-p)(r-\varepsilon))}\right)\left(\limsup_{n \ua \infty}\<x\>^{(q)}_n\right) = 0.
	\end{equation*}
	Therefore $\norm{x}_{r+\varepsilon,p,\infty} < \infty$ for $p = 1/(r+\varepsilon/2)$, which contradicts the underlying assumption that $x \notin \cB^{r + \varepsilon}_{p,\infty}[0,1]$, and thus $q^* \ge 1/r$. 
\end{proof}

In our next example, we construct a function $x$ for which condition \eqref{eq Besov cond} of Besov regularity is satisfied but where $\wh R_n(x)$ oscillates between a given number $r\in(0,1)$ and $1$. In this sense, the function exhibits an entire spectrum of roughness, which is a phenomenon akin to multifractality. After a preprint version of the present paper was posted on arXiv, Das \cite{Dasthesis} and Cont and Das~\cite{ContDas24} proposed to use $R^-$ as a \lq\lq roughness index" for measuring the roughness of a trajectory $x\in C[0,1]$. 
Based on the previous discussion, it is clear that this roughness index captures only one side of the roughness spectrum $[R^-,R^+]$ for $R^+:=1/q_*$, whereas the existence of the roughness exponent requires  $R^-=R^+$. Moreover, \Cref{Theorem Besov} shows that also the condition of Besov regularity only captures $R^-$.  In the following example, we analyze a concrete illustration of a situation in which $R^-<R^+$. Nevertheless, as shown in \Cref{xipm remark}, the Gladyshev estimator $\wh R_n$ can characterize both ends of the roughness spectrum under the reverse Jensen condition.

\begin{example}\label{example Besov} For $r\in(0,1)$, consider the function $x$ with Faber--Schauder coefficients $\theta_{n,k}=2^{(1/2-r)n}$ if $n=m!$ for some $m\in\bN$ and  $\theta_{n,k}=0$ otherwise. The idea of constructing such a function is based on Faber~\cite{Faber07}.  	It is easy to see that 
	\begin{equation*}
		2^{(2-2r)n!} \le s^2_{n!+1} \le n 2^{(2-2r)n!} \quad \text{and} \quad 2^{(2-2r)(n-1)!} \le s^2_{n!} \le (n-1) 2^{(2-2r)(n-1)!},
	\end{equation*}
	and hence $r^+ = 1$ and $r^- = r$, where $r^\pm$ is as in \eqref{eq xi pm}. For each $m \in \bN$, the Faber--Schauder coefficient $\theta_{m,k}$ does not depend on $k$, and so  $x$ satisfies the reverse Jensen condition by \Cref{Takagi class rem}. Thus, we have $q^\ast = 1/r^- = 1/r$ and $q_\ast = 1/r^+ = 1$, and so
	$$R^-=\frac1{q^*}=r\quad\text{and}\quad R^+=\frac1{q_*}=1
	$$
	by \Cref{xipm remark}. Consequently, $x$ does not admit a roughness exponent.
	Let us now show that
	\begin{equation}\label{example Besov claim eq}
		x \in \bigcap_{{\varepsilon > 0,\  p \ge 1}} \cB^{r - \varepsilon}_{p,\infty}[0,1]\setminus\bigcup_{{\varepsilon > 0,\  \frac{1}{r+\varepsilon} < p < \frac{1}{r}}}\cB^{r + \varepsilon}_{p,\infty}[0,1].
	\end{equation}
	We prove first  that $x$ belongs to the intersection on the left-hand side.
	For every $p \ge 1$, it follows from \eqref{eq p-2 var} that there exist $0 < A_p \le B_p < \infty$ and a subexponential function $\varrho_p$ such that 
	\begin{equation*}
		A_p\varrho_p^{-1}(n)2^{n(1-p)}s^p_n \le \<x\>_n^{(p)} \le B_p\varrho_p(n)2^{n(1-p)}s^p_n.
	\end{equation*} 
	Since $r \in (0,1)$, we have that for any $\varepsilon > 0$ and $p \ge 1$, 
	\begin{equation*}
		\begin{split}
			\norm{x}_{r-\varepsilon,p,\infty} &=  	\sup_{n}\Big(2^{n(p(r-\varepsilon)-1)}\<x\>^{(p)}_n\Big)^{1/p} \le    \sup_{n} \Big(2^{np(r-\varepsilon-1)}B_p \varrho_p(n)s_n^p\Big)^{1/p} \\ &=  \sup_{n } \Big(2^{(n!+1)p(r - \varepsilon - 1)}B_p\varrho_p(n!+1)s^p_{n!+1}\Big)^{1/p} \\&\le  \sup_{n} \Big(2^{(n!+1)p(r - \varepsilon - 1)}B_p\varrho_p(n!+1)n 2^{p(1-r)n!}\Big)^{1/p}\\&=  2^{r - \varepsilon - 1}B^{1/p}_p \sup_n \Big(2^{-p\varepsilon n!}n\varrho_p(n!+1)\Big)^{1/p} < \infty.
		\end{split}
	\end{equation*}The final inequality holds because $\varrho_p(n)$ is a sub-exponential function. In the next step, we are going to show that for any $\varepsilon > 0$ and $1/r > p > 1/(r+\varepsilon)$, we have $x \notin \cB^{r+\varepsilon}_{p,\infty}[0,1]$. Since $p > 1/(r+\varepsilon)$, 	\begin{equation*}
		\begin{split}
			\norm{x}_{r+\varepsilon,p,\infty} & =  	\sup_{n}\bigg(2^{n(p(r+\varepsilon)-1)}\<x\>^{(p)}_n\bigg)^{1/p} \ge    \sup_{n } \bigg(2^{(n!+1)p(r + \varepsilon - 1)}A_p\varrho^{-1}_p(n!+1)s^p_{n!+1}\bigg)^{1/p}\\
			& \ge  \sup_{n} \bigg(2^{(n!+1)p(r + \varepsilon - 1)}A_p\varrho^{-1}_p(n!+1) 2^{p(1-r)n!}\bigg)^{1/p}\\&= 2^{r + \varepsilon - 1}A_p^{1/p}\sup_n \bigg(2^{p\varepsilon n!}\varrho^{-1}_p(n!+1)\bigg)^{1/p},
		\end{split}
	\end{equation*}
	which is infinite, because $\varrho_p$ is sub-exponential. 
\end{example}

\section{The roughness exponent without the reverse Jensen condition}\label{gen section}

In this section, we collect some general mathematical properties of the
roughness exponent, which hold even if the reverse Jensen condition is not satisfied. As in \Cref{general section}, we fix an
arbitrary function $x \in C[0,1]$ with Faber--Schauder coefficients
$(\theta_{m,k})$ and $\wh R_n(x)$ as
defined in \eqref{eq_zeta_def}. Moreover, $r^+ = \limsup_n \wh R_n(x)$ and $r^- = \liminf_n \wh R_n(x)$ are as in \eqref{eq xi pm}. We start with the
following a priori estimates linking the roughness exponent $R$ with $r^+$ and $r^-$.

\begin{proposition}\label{Prop_Bias} Suppose that $x$ admits the roughness exponent $R$. Then the
	following assertions hold:
	\begin{enumerate}
		\item \label{Bias_a} If  $r^+ < 1/2$, then $ R\le1/2$; if  $r^->1/2$, then
		$R\ge1/2$.
		\item \label{Bias_b} If $R \le 1/2$, then $R \le r^-$; if $R \ge 1/2$, then  $R \ge r^+$.
	\end{enumerate}
\end{proposition}

\begin{proof}
	Recall the short-hand notion $r_n:= \wh R_n(x)$. To prove \ref{Bias_a}, we suppose first that $r^+ < 1/2$. Since \eqref{QV eq} states that $\<x\>_n^{(2)}=2^{n(1 - 2r_n)}$ and $\liminf_n \<x\>^{(2)}_n = \infty$, we must have $R\le 1/2$. In the same way, we get $R\ge1/2$ if
	$r^- > 1/2$. To prove  \ref{Bias_b}, we suppose first that $R \le 1/2$ and take $p > 1/R \ge 2$. Applying
	Jensen's inequality to \eqref{eq_Inq} gives
	\begin{equation}\label{eq_Jensen_1}
		\begin{split}
			\<x\>_n^{(p)}  &\ge A_p2^{n(1-p)}\bE\Big[\Big(\sum_{m = 0}^{n-1}S_m\Big)^{p/2}\Big] \ge A_p2^{n(1-p)}\bE\Big[\Big(\sum_{m = 0}^{n-1}S_m\Big)\Big]^{p/2} = A_p2^{n(1-pr_n)}.
		\end{split}    
	\end{equation}
	Since $\limsup_n \<x\>_n^{(p)}  = 0$, we must have $r^- > 1/p$. Taking $1/p
	\ua R$ gives $R \le r^-$. The assertion for $R \ge 1/2$ can be proved analogously.
\end{proof}

The preceding proposition provides one-sided bounds on the roughness
exponent in terms of~$r:= \lim_n \wh R_n(x)$. Our next result gives  universal two-sided
bounds for $R$. To this end, for $m \in \bN$, we denote by
\begin{equation*}
	\widehat{F}_m(t):= 2^{-m}\Big(\sum_{k = 0}^{2^m-1}\Ind{\{\theta_{m,k}^2\le t2^{-m}\}}\Big)
\end{equation*}
the empirical distribution of the $m^{\text{\rm th}}$ generation
Faber--Schauder coefficients and let $\widehat{F}^{-1}_m$ be a corresponding
quantile function.

\begin{proposition}\label{prop_two_bound} Suppose that $x$ admits the roughness exponent $R$. We define for $\nu \in \bN$ and $n \in \bN$,
	\begin{equation*}
		\begin{split}
			 r_{\nu,n}^{+}&:= 1- \frac{1}{2n}\log_2\sum_{m =
				0}^{n-1}\frac{1}{m^\nu}\sum_{k =1
			}^{m^\nu}\widehat{F}^{-1}_m\Big(2^{-m}\Big\lfloor\frac{2^m(k-1)}{m^\nu}\Big\rfloor\Big),
			\\  r_{\nu,n}^{-}&:= 1-\frac{1}{2n}\log_2\sum_{m =
				0}^{n-1}\frac{1}{m^\nu}\sum_{k =1
			}^{m^\nu}\widehat{F}^{-1}_m\Big(2^{-m}\Big\lceil\frac{2^{m}k}{m^\nu}\Big\rceil\Big).
		\end{split}
	\end{equation*}
	If the limits $r^{\pm}_{\nu}:= \lim\limits_{n \ua
		\infty}r^{\pm}_{\nu,n}$ exist for some  $\nu\in\bN$, then $r_\nu^- \le R \le r_\nu^+$.
\end{proposition}

\begin{proof}[Proof of \Cref{prop_two_bound}]
	Let us first  consider the case $p \ge 2$. It follows from
	\eqref{eq_Jensen_1} that
	\begin{equation*}
		\<x\>_n^{(p)}  \ge A_p2^{n(1-p)}\bE\Big[\sum_{m = 0}^{n-1}S_m\Big]^{p/2} =  A_p2^{n(1-p)}\bigg(\sum_{m = 0}^{n-1}\int_{0}^{1}\widehat{F}^{-1}_m(t)dt\bigg)^{p/2}.
	\end{equation*}
	Furthermore, we have 
	\begin{equation*}
		\int_{0}^{1}\widehat{F}^{-1}_m(t)dt = \sum_{k = 1}^{m^\nu}\int_{(k-1)/m^\nu}^{k/m^\nu}\widehat{F}^{-1}_m(t)dt.
	\end{equation*}
	Note that $2^{-m}\floor{2^m(k-1)/m^\nu} \le (k-1)/m^\nu \le k/m^\nu \le
	2^{-m}\ceil{2^mk/m^\nu}$. Moreover, both $2^{-m}\floor{2^m(k-1)/m^\nu}$
	and $2^{-m}\ceil{2^mk/m^\nu}$ belong to $\{k2^{-m}:k=0,\dots,2^m\}$.
	Hence,
	\begin{equation}\label{eq_two_sided_interval_1}
		\widehat{F}^{-1}_m\Big(2^{-m}\Big\lceil{\frac{2^{m}k}{m^\nu}}\Big\rceil\Big) \ge \widehat{F}^{-1}_m(t) \ge \widehat{F}^{-1}_m\Big(2^{-m}\Big\lfloor{\frac{2^m(k-1)}{m^\nu}}\Big\rfloor\Big)
	\end{equation}
	for all $t \in [(k-1)/m^\nu,k/m^\nu]$. This gives
	\begin{equation*}
		\<x\>_n^{(p)}  \ge A_p2^{n(1-p)}\Big[\sum_{m = 0}^{n-1}\frac{1}{m^\nu}\sum_{k =1 }^{m^\nu}\widehat{F}^{-1}_m\Big(2^{-m}\Big\lfloor{\frac{2^m(k-1)}{m^\nu}}\Big\rfloor\Big)\Big]^{p/2} = A_p2^{n(1-pr^{+}_{\nu,n})}.
	\end{equation*}
	Moreover, applying Jensen's inequality to \eqref{eq_burkholder}, we get 
	\begin{equation*}
		\<x\>_n^{(p)}  \le B_p2^{n(1-p)}n^{p/2}\bE\Big[\Big(\frac{1}{n}\sum_{m = 0}^{n-1}S_m\Big)^{p/2}\Big] \le B_p2^{n(1-p)}n^{p/2-1}\sum_{m = 0}^{n-1}\bE[S_m^{p/2}].
	\end{equation*}
	We once again apply \eqref{eq_two_sided_interval_1} to this inequality
	and obtain
	\begin{equation*}
		\begin{split}
			\<x\>_n^{(p)}  
			&\le B_p2^{n(1-p)}n^{p/2-1}\left(\sum_{m = 0}^{n-1}\frac{1}{m^\nu}\sum_{k = 1}^{m^\nu}\Big[\widehat{F}^{-1}_m\Big(2^{-m}\Big\lceil{\frac{2^{m}k}{m^\nu}}\Big\rceil\Big)\Big]^{p/2}\right)\\
			&\le B_p2^{n(1-p)}n^{(\nu+1)(p/2-1)}\Big[\sum_{m = 0}^{n-1}\frac{1}{m^\nu}\sum_{k = 1}^{m^\nu}\widehat{F}^{-1}_m\Big(2^{-m}\Big\lceil{\frac{2^{m}k}{m^\nu}}\Big\rceil\Big)\Big]^{p/2}\\&= B_p2^{n(1-pr^-_{\nu,n}+(\nu+1)(p/2-1)\log_2 n/n )}.
		\end{split}
	\end{equation*}
	Moreover, a similar inequality can be obtained for $1 \le p \le 2$,
	\begin{equation*}
		A_p2^{n\big(1-pr^{+}_{\nu,n}+(\nu+1)(p/2-1)\log_2 n/n\big)}\le \<x\>_n^{(p)}  \le B_p2^{n(1-pr_{\nu,n}^{-})}.
	\end{equation*}
	In both cases ($p \ge 2$ or $1 \le p \le 2$), the exponents inside the
	brackets converge to $1 - p r^\pm_\nu$ as $n \ua \infty$. Therefore,
	$\lim_{n}\<x\>_n^{(p)}  = \infty$ for $p < 1/r^+_\nu$ and $\lim_{n}\<x\>_n^{(p)}  = 0$ for $p > 1/r^-_\nu$. This leads to
	$r^-_\nu \le R \le r_\nu^+$, and concludes the proof.
\end{proof}

Our next  result  looks at the special case of functions $x$ of bounded
variation. Such functions clearly have the roughness exponent $R=1$. The
converse statement, however, is not true: there exist functions with $R=1$ that
are nowhere differentiable and hence not of bounded variation; see \Cref{TL ex}. 

\begin{proposition}\label{BV prop} The following assertions hold: 
	\begin{enumerate}
		\item \label{Cor_BV_a} If $\sup_n s_n < \infty$, then the function
		$x$ is of bounded variation. 
		\item \label{Cor_BV_b} If the function $x$ is of bounded variation,
		then $\sup_n 2^{-n/2}\sum_{k = 0}^{2^n-1}|\theta_{n,k}| < \infty$.
	\end{enumerate}
\end{proposition}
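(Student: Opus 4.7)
My plan is to work entirely with the dyadic increments $\delta_n(k) := x((k+1)2^{-n})-x(k2^{-n})$ and exploit their coupling with the Faber--Schauder coefficients. Unwinding the definition of $\theta_{n,k}$ yields $\delta_{n+1}(2k)+\delta_{n+1}(2k+1)=\delta_n(k)$ and $\delta_{n+1}(2k)-\delta_{n+1}(2k+1)=2^{-n/2}\theta_{n,k}$. Two consequences drive the entire argument: the max identity
\[|\delta_{n+1}(2k)|+|\delta_{n+1}(2k+1)|=\max\bigl(|\delta_n(k)|,\,2^{-n/2}|\theta_{n,k}|\bigr),\]
which follows from $|a+b|+|a-b|=2\max(|a|,|b|)$, and the parallelogram identity $\delta_{n+1}(2k)^2+\delta_{n+1}(2k+1)^2=\tfrac12(\delta_n(k)^2+2^{-n}\theta_{n,k}^2)$. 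Summing the parallelogram identity over $k$ and telescoping yields the closed form $2^n V_n^{(2)}=(x(1)-x(0))^2+s_n^2$, which is the crucial piece of bookkeeping.

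Part (b) is the quick half: taking absolute values in the second coupling and summing gives
\[2^{-n/2}\sum_{k=0}^{2^n-1}|\theta_{n,k}|\le\sum_k\bigl(|\delta_{n+1}(2k)|+|\delta_{n+1}(2k+1)|\bigr)=V_{n+1}^{(1)}(x).\]
Since $x$ is continuous and of bounded variation, a standard approximation argument (any finite partition can be approximated by a dyadic refinement, with $V_\pi$ continuous in the partition by continuity of $x$) shows that $\sup_n V_n^{(1)}(x)\le V(x)<\infty$, and (b) follows.

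For (a), I would combine $\max(a,b)\le\sqrt{a^2+b^2}$ with Cauchy--Schwarz applied jointly across all $2^n$ indices at level $n$:
\[V_{n+1}^{(1)}\le\sum_k\sqrt{\delta_n(k)^2+2^{-n}\theta_{n,k}^2}\le\sqrt{2^n V_n^{(2)}+\textstyle\sum_k\theta_{n,k}^2}=\sqrt{(x(1)-x(0))^2+s_{n+1}^2}.\]
Thus $\sup_n s_n<\infty$ forces $\sup_n V_n^{(1)}(x)<\infty$, and the same approximation argument promotes this to $V(x)<\infty$, so $x$ is of bounded variation.

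The main obstacle is precisely the ordering of Cauchy--Schwarz and the telescoping in (a). The naive inductive estimate $V_{n+1}^{(1)}\le V_n^{(1)}+2^{-n/2}\sum_k|\theta_{n,k}|$, followed by Cauchy--Schwarz level by level, only yields $V_n^{(1)}\le V_0^{(1)}+\sum_{m<n}\sqrt{\sum_k\theta_{m,k}^2}$, which can diverge even when $\sum_{m,k}\theta_{m,k}^2<\infty$ (for instance when $\sum_k\theta_{m,k}^2\asymp 1/m^2$). The improvement is to bound the max pointwise by a Euclidean norm and then invoke the $\ell^2$--$\ell^1$ Cauchy--Schwarz \emph{simultaneously} across all $2^n$ increments at one level, so that the closed-form expression for $V_n^{(2)}$ absorbs every cross-level contribution into a single square root controlled by $s_{n+1}$.
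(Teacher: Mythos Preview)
Your proof is correct and takes a genuinely different route from the paper's.

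The paper proves both parts via its probabilistic machinery (\Cref{Proposition_Burk}): for (a) it applies the upper Burkholder bound at $p=1$ together with Jensen's inequality for $\sqrt{\,\cdot\,}$ to get $V_n^{(1)}\le B_1\,\bE[(\sum_m S_m)^{1/2}]\le B_1\,(\bE[\sum_m S_m])^{1/2}=B_1 s_n$; for (b) it uses the lower Burkholder bound to get $A_1\,\bE[S_n^{1/2}]=A_1\,2^{-n/2}\sum_k|\theta_{n,k}|\le V_n^{(1)}$. Both parts then invoke the classical fact (cited from Natanson) that for continuous $x$ the total variation equals $\sup_n V_n^{(1)}$.

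Your argument is entirely elementary and avoids the martingale/Burkholder apparatus. The two coupling identities, the max identity $|a|+|b|=\max(|a+b|,|a-b|)$, and the parallelogram law are exactly the right tools; the crucial move in (a)---bounding the max pointwise by the Euclidean norm and then applying Cauchy--Schwarz \emph{across all $2^n$ indices at once} so that the closed form $2^nV_n^{(2)}=(x(1)-x(0))^2+s_n^2$ absorbs everything---is clean and gives the explicit constant $V_{n+1}^{(1)}\le\sqrt{(x(1)-x(0))^2+s_{n+1}^2}$, which is sharper than the paper's bound involving the unspecified Burkholder constant $B_1$. Your approach also handles $x(0)\neq x(1)$ directly, whereas the paper's \Cref{Proposition_Burk} is stated under $x(0)=x(1)=0$. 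What the paper's approach buys is uniformity of method: the same Burkholder machinery drives \Cref{Thm_main}, \Cref{Prop_Bias}, and the other results, so \Cref{BV prop} falls out as a corollary rather than requiring a separate argument.
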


\begin{proof}
	\ref{Cor_BV_a}: It follows from \Cref{Proposition_Burk} that there
	exists $B_1 > 0$ such that 
	\begin{equation*}
		\<x\>_n^{(1)}  \le B_1\bE\Big[\Big(\sum_{m = 0}^{n-1}S_m\Big)^{1/2}\Big] \le B_1\Big(\sum_{m = 0}^{n-1}\bE[S_m]\Big)^{1/2} = B_1 s_n.
	\end{equation*}
	Therefore, $\sup_n\<x\>_n^{(1)} \le B_1\sup_n s_n < \infty$, and
	$\sup_n\<x\>_n^{(1)} $  coincides with the total variation of the continuous
	function $x$ (see, e.g., Theorem 2 in \S5 of Chapter VIII in
	\cite{Natanson}).
	
	\ref{Cor_BV_b}: Following from \Cref{Proposition_Burk}, we have 
	\begin{equation*}
		A_1\Big(2^{-n/2}\sum_{k = 0}^{2^n-1}|\theta_{n,k}|\Big) = A_1\bE[S_n^{1/2}] \le A_1\bE\Big[\Big(\sum_{m = 0}^{n-1}S_m\Big)^{1/2}\Big] \le\<x\>_n^{(1)} ,
	\end{equation*}
	for some $A_1 > 0$. Taking suprema on both sides completes our proof.
\end{proof}

\section{Model-free estimation of the roughness exponent}\label{stat section}
	
	In this section, we will introduce and analyze model-free estimators for the roughness exponent. To this end, recall that	$\widehat{R}_n(x) = 1-\frac1n\log_2 s_n$ is a consistent estimator for the roughness exponent of any function  $x \in C[0,1]$ whose Faber--Schauder coefficients $(\theta_{m,k})$ satisfy the reverse Jensen condition. \Cref{fig:three graphs} illustrates that the estimator performs very well for sample paths of fractional Brownian motion and yields  accurate estimates of the corresponding Hurst parameter. 
			\begin{figure}[H]
		\centering
			\includegraphics[width=8cm]{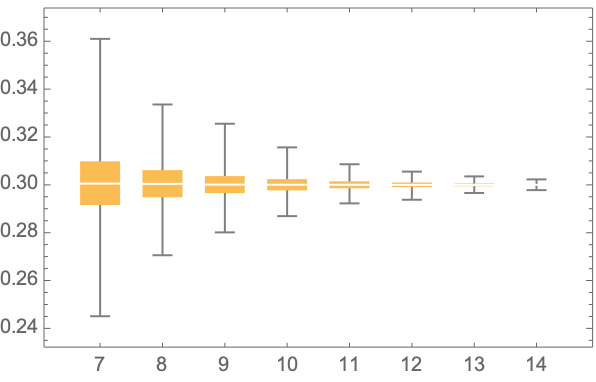}
		\quad		\includegraphics[width=8cm]{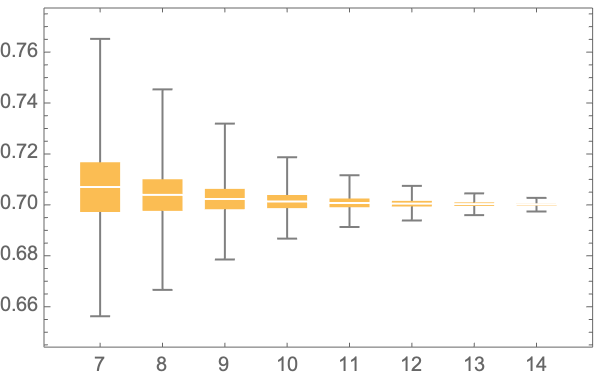}
		\caption{Box plots of $\wh R_n$ for $n=7,\dots, 14$ based on  10,000 sample paths of fractional Brownian motion  with  $R = 0.3$ (left) and $R = 0.7$ (right). }
		\label{fig:three graphs}
	\end{figure}

But the estimator $\wh R_n$ also has a disadvantage: it is not scale-invariant. Indeed,  we clearly have		\begin{equation}\label{wh H scaling eq}
			\widehat{R}_n(\lambda x) - \widehat{R}_n(x) = -\frac{\log_2|\lambda|}{n}\qquad\text{for $\lambda\neq0$.}
		\end{equation}
As a consequence, multiplying $x$ with a constant factor can lead to negative or unreasonably large estimates for $R$ and 
substantially slow down or speed up the convergence $\wh R_n(x)\to R$ (note that both the roughness exponent itself and  the reverse Jensen condition are scale-invariant).  
		Naive normalization of the  data does not provide a resolution of this issue, as is illustrated in  \Cref{normalized whHn figure}. In the next subsection, we are therefore going to derive intrinsically defined optimal scaling factors and the corresponding improved estimators for the roughness exponent.

\begin{figure}[H]
		\centering
		\begin{minipage}[b]{0.45\textwidth}
		\includegraphics[width=\textwidth]{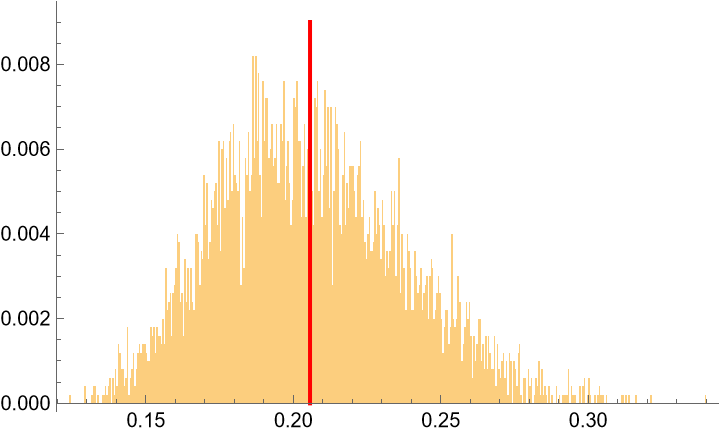}
		\end{minipage}
		\hfill
		\begin{minipage}[b]{0.45\textwidth}
			\centering
			\includegraphics[width=\textwidth]{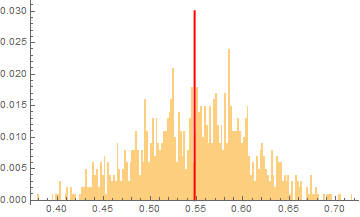}
		\end{minipage}
\caption{Histograms of $\wh R_{12}$ applied to 5000 standardized sample paths of fractional Brownian motion with Hurst parameters  $H=0.3$ (left) and $H=0.7$ (right). Here, standardization means that each sample path is affinely scaled to have empirical mean zero and variance one. The red vertical lines represent the empirical means  of the corresponding estimates.  }\label{normalized whHn figure}
		\end{figure}

When presented with a generic time series, we face significant uncertainty about possible data scaling that, as discussed above, can lead to significant underperformance of the estimator $\wh R_n$. 
In this section, we will therefore analyze two ideas on constructing scale-invariant estimators for the roughness exponent out of the sequence $(\wh R_n)$. Both ideas will lead to new estimators that are  linear combinations of $\wh R_n,\dots, \wh R_{n-m}$ for a number $m$.  

	The first idea consists in  looking for a scaling factor $\lambda_n$ that optimizes a certain criterion over all possible parameters $\lambda>0$ and then to take $\wh R_n(\lambda_n x)$ as a new estimator. In particular, we will look at the following two choices. A third choice will be given in \Cref{regression scale est def}.
	 
	 \begin{definition}\label{sls and tls def}Fix $m\in\bN$ and $\alpha_0,\dots,\alpha_{m}\ge0$ with $\alpha_0>0$.
	 \begin{enumerate}
	 \item For $n>m$, the \textit{sequential scaling factor} $\lambda_{n}^{s}$ and the \textit{sequential scale estimate} $R^{s}_{n}(x)$ are defined as follows,
	\begin{equation}\label{eq_def_seqloc}
		\begin{split}
			\lambda^{s}_{n}&:= \argmin_{\lambda > 0} \sum_{k = n-m}^{n}\alpha_{n-k}\Big(\widehat{R}_k(\lambda x) - \widehat{R}_{k-1}(\lambda x)\Big)^2 \quad \text{and} \quad
			R^{s}_{n}(x):= \wh R_{n}(\lambda^{s}_{n} x).
		\end{split}
	\end{equation}
	The corresponding mapping $R^{s}_{n}:C[0,1]\rightarrow \bR$ will be called the \textit{sequential scale estimator}.
\item The \textit{terminal scaling factor} $\lambda_{n}^{t}$ and the \textit{terminal scale estimate} $R_{n}^{t}(x)$ are defined as follows, 
	\begin{equation}\label{eq_def_terloc}
		\begin{split}
			\lambda^{t}_n&:= \argmin_{\lambda > 0} \sum_{k = n-m}^{n}\alpha_{n-k}\Big(\widehat{R}_n(\lambda x) - \widehat{R}_{k-1}(\lambda x)\Big)^2 \quad \text{and} \quad
			R_{n}^{t}(x):= \wh R_n(\lambda^{t}_n x).
		\end{split}
	\end{equation}
The corresponding mapping $R_n^{t}:C[0,1]\rightarrow \bR$ will be called the \textit{terminal scale estimator}.
 \end{enumerate}
	 \end{definition}

We will see in 	 \Cref{Thm_LE} that  \Cref{sls and tls def} is well posed, because the minimization problems in   \eqref{eq_def_seqloc}
 and \eqref{eq_def_terloc}  admit unique solutions for every function $x\in C[0,1]$. 
	The intuition for  sequential scaling is fairly simple. The sequential scaling factor $\lambda^{s}_{n}$ minimizes the weighted mean-squared differences $\widehat{R}_k(\lambda x) - \widehat{R}_{k-1}(\lambda x)$ for $k = m+1,\dots,n$. Since $\widehat{R}_n$ is a consistent estimator, the faster the sequence $\widehat{R}_k(\lambda x)$ converges, the more accurate the final estimate $\widehat{R}_n(\lambda x)$ will be. The idea for  terminal scaling is also straightforward: Since the estimator $\widehat{R}_n$ is consistent,  the last estimate $\widehat{R}_n(x)$ should be more precise than any other previous value  $\widehat{R}_k(x)$ with $k < n$. Therefore, we aim at minimizing the weighted mean-squared differences $\widehat{R}_k(\lambda x)$ and $\widehat{R}_n(\lambda x)$.

	Our second  idea for constructing a scale-invariant estimator is as follows. It follows from \eqref{wh H scaling eq} that for $\lambda>0$
	$$n\wh R_n( x)=n\wh R_n(\lambda x)+\log_2\lambda.
	$$
For an ideal scaling factor  $\lambda$, the right-hand side will be close to $n R+\log_2|\lambda|$. Therefore, the idea is to perform linear regression on the data points $(n-m)\wh R_{n-m}(x),(n-m-1)\wh R_{n-m-1}(x),\dots, n\wh R_n( x)$ and to take the corresponding slope as an estimator for $R$.

\begin{definition}\label{regression scale est def}Let $m\in\bN$ and  $\alpha_0,\dots,\alpha_m\ge0$ be such that $\sum_{k=0}^m\alpha_k=1$. Then, for $n>m$, the  \emph{regression scale estimate} $R^{r}_n(x)$ and the \emph{regression scaling factor} $\lambda^{r}_n$ are given by 
\begin{equation}\label{regression problem}
(R^{r}_n(x),\lambda^{r}_n)=\argmin_{h\in\bR,\,\lambda>0}\sum_{k=0}^m\alpha_k\Big((n-k)(\wh R_{n-k}(x)-h)-\log_2\lambda\Big)^2.
\end{equation}
The corresponding mapping	$R_n^{r}:C[0,1]\rightarrow \bR$ will be called the \textit{regression scale estimator}.
\end{definition}

	The next proposition shows in particular that  all three estimators can be represented by linear combinations of the estimators $\widehat{R}_k$ for $m \le k \le n$ and that they are scale-invariant  estimators of the roughness exponent $R$.

	\begin{proposition}\label{Thm_LE} Consider the context of Definitions~\ref{sls and tls def} and~\ref{regression scale est def} with fixed $m\in\bN$ and $\alpha_0,\dots,\alpha_{m}\ge0$ such that $\alpha_0>0$.
	\begin{enumerate}
	\item The three optimization problems in \eqref{eq_def_seqloc}, \eqref{eq_def_terloc}, and \eqref{regression problem} admit unique solutions for every function $x\in C[0,1]$. In particular, all objects in Definitions~\ref{sls and tls def} and~\ref{regression scale est def}  are well defined.
 \item The sequential and terminal scale estimators can be represented  by the following respective linear combinations of the estimators $\wh R_k$,
  \begin{align*}
   R^{s}_{n} =\beta_{n,n}\wh R_n+\cdots+\beta_{n,n-m-1}\wh R_{n-m-1}\qquad\text{and}\qquad  R^{t}_{n} =\gamma_{n,n}\wh R_n+\cdots+\gamma_{n,n-m-1}\wh R_{n-m-1}
  \end{align*}
where
$$\beta_{n,k}=\begin{cases}\displaystyle1+\frac{\alpha_0}{c^{\textrm{\rm s}}_{n}n^2(n-1)}&\text{if $k=n$,}\\
\displaystyle\frac1{c^{\textrm{\rm
s}}_{n}nk}\Big(\frac{\alpha_{n-k}}{k-1}-\frac{\alpha_{n-k-1}}{k+1}\Big)&\text{if
$n-m\le k\le n-1$,}\\
\displaystyle
\frac{-\alpha_m}{c^{\textrm{\rm s}}_{n}n(n-m)(n-m-1)}&\text{if $ k= n-m-1$,}
\end{cases}\quad\text{for}\quad c^{s}_{n}:= \sum_{k =
n-m}^{n}\frac{\alpha_{n-k}}{k^{2}(k-1)^{2}},
$$
and
$$\gamma_{n,k}=\begin{cases}\displaystyle1+\frac1{c_n^{t}n^2}\sum_{j=n-m}^n\alpha_{n-j}\frac{n-j+1}{j-1}&\text{if $k=n$,}\\
\displaystyle\frac{k-n}{c_n^{t}n^2k}\alpha_{n-k-1}&\text{otherwise,}
\end{cases}
\quad\text{for}\quad
c_n^{t}=\sum_{k=n-m}^n\alpha_{n-k}\Big(\frac{n-k+1}{n(k-1)}\Big)^2.
$$
If in addition $\sum_k\alpha_k=1$, then the regression scale estimator is given by
\begin{equation}\begin{split}
R^{r}_n=\frac1{c^{r}}\sum_{k=0}^m\alpha_k(n-k)\big(k-a)\wh R_{n-k},\qquad\text{where $\displaystyle a=\sum_{k=0}^m\alpha_kk$ and $\displaystyle c^{r}=a^2-\sum_{k=0}^m\alpha_kk^2$.}\end{split}
\end{equation}

\item The sequential, terminal, and regression scale estimators are scale-invariant. That is,  for $n>m$, $x \in C[0,1]$, and $\lambda \neq 0$, we have $R^{t}_{n}(\lambda x) = R^{t}_{n}(x)$, $R^{s}_{n}(\lambda x) = R^{s}_{n}(x)$ and $R^{r}_{n}(\lambda x) = R^{r}_{n}(x)$.
	\item If $x\in C[0,1]$ is such that there exists $\lambda\neq 0$ for which $|\wh R_n(\lambda x)-R|=O(a_n)$ as $n\ua\infty$ for some sequence $(a_n)$ with $a_n=o(1/n)$, then $			|R^{s}_{n}(x)-R| $, $ |R^{t}_{n}(x) -R|$, and $|R^{r}_{n}(x) -R|$ are all of the order $O(na_n)$.
	\end{enumerate}			\end{proposition}

	\begin{proof}
	We prove (a) and (b) together. Fix $x \in C[0,1]$. For $\lambda>0$ we write $\phi:=\log_2\lambda$ and we will minimize \eqref{eq_def_seqloc}
 and \eqref{eq_def_terloc} over $\phi$ rather than over $\lambda$. Recall from  \eqref{wh H scaling eq}
  that 
		\begin{equation}\label{eq_parametrization}
			\widehat{R}_k(2^\phi x)= \widehat{R}_k(x)-\frac\phi k.
		\end{equation}
		One therefore sees that the objective functions in \eqref{eq_def_seqloc}
 and \eqref{eq_def_terloc} are strictly convex in $\phi$, and so minimizers  can be computed as  the unique zeros of the corresponding derivatives. Differentiating the objective function in \eqref{eq_def_seqloc} and summing by parts gives		\begin{align}		
			\lefteqn{	\frac12\frac{\partial}{\partial \phi}\sum_{k = n-m}^{n}\alpha_{n-k}\Big(\widehat{R}_k(2^\phi x) - \widehat{R}_{k-1}(2^\phi x)\Big)^2}\nonumber\\
		&=
				c^{s}_{n}\phi+ \frac{\alpha_0\wh R_n(x)}{n(n-1)}+\sum_{k=n-m}^{n-1}\wh R_k(x)\Big(\frac{\alpha_{n-k}}{k(k-1)}-\frac{\alpha_{n-k-1}}{(k+1)k}\Big)-\frac{\alpha_m\wh R_{n-m-1}(x)}{(n-m)(n-m-1)}.\label{summation by parts eq}
			\end{align}
		Setting this expression equal to zero yields the global minimizer $\phi^{s}_n$,
		\begin{equation*}
			\phi^{s}_n = -\frac{1}{c^{s}_{n}}\bigg(\frac{\wh R_n(x)}{n}\cdot \frac{\alpha_0}{n-1}+\sum_{k=n-m}^{n-1}\frac{\wh R_k(x)}{k}\Big(\frac{\alpha_{n-k}}{k-1}-\frac{\alpha_{n-k-1}}{k+1}\Big)-\frac{\wh R_{n-m-1}(x)}{n-m-1}\cdot\frac{\alpha_m}{n-m}\bigg).		\end{equation*}
	Applying \eqref{eq_parametrization}
to $\wh R_n(2^	{\phi_n^{s}} x)$ yields the asserted expression for  the sequential scale estimator. 		
	The proof for the terminal scale estimator is analogous. The assertion for the regression scale estimator follows by standard computations for linear regression. 
	
(c) For $\eta > 0$, we have 
		\begin{equation*}
			\argmin_{\lambda > 0} \sum_{k = n-m}^{n}\alpha_{n-k}\Big(\widehat{R}_k(\lambda\eta x) - \widehat{R}_{k-1}(\lambda \eta x)\Big)^2 = \frac{1}{\eta}\argmin_{\lambda > 0} \sum_{k = n-m}^{n}\alpha_{n-k}\Big(\widehat{R}_k(\lambda x) - \widehat{R}_{k-1}(\lambda x)\Big)^2 = \frac{\lambda^{s}_{n}}{\eta},
		\end{equation*}
		where $\lambda^{s}_{n}$ is the sequential scaling factor. Therefore, 
	$
			R^{s}_{n}(\eta x) =  \widehat{R}_n(\lambda^{s}_{n}x) = R^{s}_{n}(x)$.		The same argument yields the scale invariance of the  terminal and regression scale estimators. 
		
		(d) Clearly, we can assume without loss of generality that $\lambda=1$. First, we consider the sequential scale estimator. Using \eqref{summation by parts eq}
one sees that  $\sum_k\beta_{n,k}=1$ so that 
$$  R^{s}_{n} -R=\beta_{n,n}(\wh R_n-R)+\cdots+\beta_{n,n-m-1}(\wh R_{n-m-1}-R).
$$
Since $\beta_{n,n-k}=O(n)$ for each $k$, the assertion follows. The proof for the terminal scale estimator is analogous. For the regression scale estimator, one checks that $\frac1{c^{r}}\sum_{k=0}^m\alpha_k(n-k)\big(k-a)=1$ and proceeds as before.
\end{proof}

Part (d) of \Cref{Thm_LE} implies in particular the consistency of the sequential, terminal, and regression scale estimators if $x\in C[0,1]$ satisfies $|\wh R_n(\lambda x)-R|=o(1/n)$ for some $\lambda\neq0$. It also enables us to obtain the convergence rate of the estimators $R^s_n$, $R^t_n$ and $R^r_n$ for the sample paths of fractional Brownian motion with drift.

\begin{corollary}\label{thm fbm convergence} Let $X^H$ be a fractional Brownian motion with drift as in \eqref{eq sde fbm}.
	Then the following almost sure rates of convergence hold for the scale-invariant estimator $R_n^p$ with  $p\in\{ s,t,r\}$,
		\begin{equation}\label{eq rate scale}
		\left|R^p_n(X^H) - H\right| = \begin{cases}
			  O \left(2^{-n/2}\sqrt{\log n}\right) &\text{if $H \in (0,\frac{1}{2})$,} \\
				  O \left(2^{-n/2}n^{1/2}\sqrt{\log n}\right) &\text{if $ H = \frac{1}{2}$,}\\
			  O \left(2^{(H-1)n}\sqrt{\log n}\right) &\text{if $ H \in (\frac{1}{2},1)$.} 
		\end{cases} 
	\end{equation}
\end{corollary}

		\begin{remark}
		A general scale-invariant estimator $\wt R_n$ can be constructed by solving an optimization problem of the  form
		$$\widetilde{\lambda}:= \argmin_{\lambda > 0}\sum_{j > k}^{n}\alpha_{j,k}|\widehat{R}_{j}(\lambda x) - \widehat{R}_k(\lambda x)|^2 ,$$
		where $\alpha_{j,k}$ are given coefficients, and by setting $
		\widetilde{R}_n(x) := \widehat{R}_n(\widetilde{\lambda}x)$. If the
		global minimizer $\widetilde{\lambda}$ exists,  the estimate
		$\widetilde{R}_n(x)$ will be a linear combination of the estimates
		$\widehat{R}_k(x)$.
	\end{remark}

Let us now investigate the relations between our estimators and the one used in
\cite{GatheralRosenbaum}. To this end, we let for $x \in C[0,1]$, $n,k \in \bN$
and $q \in \bR^-$, 
	\begin{equation*}
		m(q,k,n):= \Big\lfloor{\frac{k}{2^n}}\Big\rfloor^{-1}\sum_{j = 1}^{\floor{2^{n}/k}}|x(kj2^{-n})-x(k(j-1)2^{-n})|^q.
	\end{equation*}
	Following Rosenbaum~\cite{Rosenbaum2009}, Gatheral et
	al.~\cite{GatheralRosenbaum} assume that there exists a positive constant
	$b_q$  such that
	\begin{equation*}
		\big(k2^{-n}\big)^{-qR} m(q,k,n) \rightarrow b_q, \qquad 
		\text{for any fixed $k \in \bN$, as $n \ua \infty$,} 
	\end{equation*}
From here, the estimator $R_n^v$ from~\cite{GatheralRosenbaum} is computed by
way of a  linear regression. More precisely, let $\mathcal{K}$ be a finite
collection of positive integers and $\mathcal{Q}$ be a finite collection of
positive real numbers. For $q \in \mathcal{Q}$, the estimate $R_{n,q}^{v}(x)$ of
the roughness exponent of the continuous function $x$ using the
$q^\text{th}$ variation is obtained by regressing $\log_2 m(q,k,n)$ with respect
to $\log_2 k$, i.e.,
	\begin{equation}\label{eq simple regression}
		(R_{n,q}^{v}(x),b_q) = \argmin_{h\in\bR,\,b_q>0} \sum_{k \in \mathcal{K}}\Big(\log_2 m(q,k,n)-\log_2 b_q - qh(\log_2 k - n)\Big)^2.
	\end{equation}
	Standard computations for linear regression then yield that
	\begin{equation*}
		R_{n,q}^{v}(x)= \dfrac{\sum_{k \in \mathcal{K}}\big(\log_2 k - \overline{\log_2 \mathcal{K}}\big)\big(\log_2 m(q,k,n)-\overline{\log_2 m(q,\mathcal{K},n)}\big)}{q \cdot \sum_{k \in \mathcal{K}}\big(\log_2 k - \overline{\log_2 \mathcal{K}}\big)},
	\end{equation*}
	where $|\cdot|$ denotes the cardinality of a set  and 
	$$\overline{\log_2 \mathcal{K}} = \frac1{ |\mathcal{K}|}\sum_{k \in
	 \mathcal{K}}\log_2 k\quad\text{and}\quad \overline{\log_2
	 m(q,\mathcal{K},n)} =\frac1{ |\mathcal{K}|}\sum_{k \in \mathcal{K}}\log_2
	 m(q,k,n).$$ The \textit{simple regression estimate} $R_n^v(x)$ is then the
	 sample average of $R_{n,q}^{v}(x)$ for all $q \in \mathcal{Q}$. In other
	 words, we have
	\begin{equation*}
		R_{n}^{v}(x):= \frac{1}{|\mathcal{Q}|}\sum_{q \in \mathcal{Q}}R_{n,q}^{v}(x).
	\end{equation*}
	The corresponding mapping $R_n^v: C[0,1] \rightarrow \bR$ will be called the
	\textit{simple regression estimator}, which is a formal description of the
	estimator used in~\cite{GatheralRosenbaum}. It is also clear that the simple
	regression estimator $R^v_n$ is scale-invariant, i.e., for any $n \in \bN$
	and $x \in C[0,1]$, we have $R^v_n(\lambda x) = R^v_n(x)$ for any $\lambda
	\neq 0$. Our next result states that the simple regression estimator is a
	particular case of our regression scale estimator under a certain choice of
	parameters.
	
	\begin{proposition}\label{Gatheral prop}Let $\mathcal{Q} = \{2\}$,
	$\mathcal{K} = \{1,2,\cdots,2^m\}$ and $\alpha_k = \alpha$ for all $k$ and
	some $\alpha > 0$. Then the regression scale estimator $R^r_n$ coincides
	with the simple regression estimator $R^v_n$ for all $n>m$.
	\end{proposition}
	
	\begin{proof}
		Since the minimization problems in \eqref{regression problem} and
		\eqref{eq simple regression} admit unique solutions for every function
		$x \in C[0,1]$,  it suffices to show \eqref{regression problem} is
		equivalent to \eqref{eq simple regression} when taking the above
		parameters. Let $\lambda =1/\sqrt{b_2}$, and we rewrite optimization
		problem in \eqref{eq simple regression} into
		\begin{equation*}
		\begin{split}
			&\argmin_{h\in\bR,\,b_q>0}\sum_{k \in \mathcal{K}}\Big(\log_2 m(2,k,n)-\log_2 b_2 - 2h(\log_2 k - n)\Big)^2  \\&= \argmin_{h\in\bR,\,\lambda>0}\sum_{j = 0}^{m}\Big(2(j-n)+2\log_2s_{n-j}+2\log_2 \lambda - 2h(j - n)\Big)^2\\&=\argmin_{h\in\bR,\,b_q>0} \sum_{j =0}^{m}\alpha \cdot\Big((n-j)(\widehat{R}_{n-j}(x)-h)-\log_2 \lambda \Big)^2,
		\end{split}
		\end{equation*}
	where the second identity follows from~\cite[Proposition
	2.1]{MishuraSchied}.\end{proof}

\section{The roughness exponent for unequally spaced
partitions}\label{irregular section}

In this section, we will relax our assumption that the data points for sampling
the $p^{\text{th}}$ variation $\<x\>_n^{(p)}$ of $x$ are located at the
$n^{\text{th}}$ dyadic partition of the time interval $[0,1]$. To this end, let
us first recall that a \emph{partition} of the interval $[0,1]$ is a finite set
$\bT=\{t_0,\dots, t_m\}$ such that $0=t_0<t_1<\cdots<t_m=1$. Its \emph{mesh} is
defined as the maximal distance between two neighboring partition points. A
sequence $(\bT_n)_{n\in\bN_0}$ of partitions of $[0,1]$ is called a
\emph{refining sequence of partitions of $[0,1]$}  if
$\bT_0\subset\bT_1\subset\cdots$ and the mesh of $\bT_n$ tends to $0$ as
$n\ua\infty$. In this case, we will generically write
$\bT_n=\{t_0^n,t^n_1,\dots, t_{m_n}^n\}$. For simplicity, we will henceforth
assume that $\bT_0=\{0,1\}$ and that, when passing from $\bT_n$ to $\bT_{n+1}$,
exactly one new partition point is added between any two neighboring partition
points of $\bT_n$, i.e., $|(t_{k-1}^n,t^n_k)\cap \bT_{n+1}|=1$.  This assumption
can always be satisfied by re-arranging the partitions, and in this case, we
have  $m_n=2^n$. Furthermore, under this assumption, it follows $t^n_{k} =
t^{n+1}_{2k}$.  

In \Cref{general section}, the definition of the roughness exponent of a
 continuous function $x$ was based on the Faber--Schauder expansion of $x$. The
 Faber--Schauder functions are clearly tied to the sequence of dyadic
 partitions, but, similar to Cont and Das~\cite{ContDas21}, the definition of
 the Faber--Schauder functions can be extended to our present setup as follows.
 First, let $b_{\emptyset}(t):=t$. Next, for $n\in\bN_0$ and $k\in\{0,\dots,
 2^{n}-1\}$, we define
$$b_{n,k}(t):=\begin{cases}\displaystyle\frac{t-t^{n+1}_{2k}}{t^{n+1}_{2k+1}-t^{n+1}_{2k}}&\text{if
	$t\in[t^{n+1}_{2k},t^{n+1}_{2k+1})$,}\\ &\\\displaystyle
	\frac{t^{n+1}_{2k+2}-t}{t^{n+1}_{2k+2}-t^{n+1}_{2k+1}}&\text{if
	$t\in[t^{n+1}_{2k+1},t^{n+1}_{2k+2})$,}\\ & \\
	0&\text{otherwise.}
\end{cases}
$$
It is shown in~\cite[Section 3.3]{ContDas21} that any  function $x \in C[0,1]$ can by represented by the following uniformly convergent series, 
\begin{equation}\label{finite basis development}
		{x}(t)={f}(0)+(x(1)-x(0))t+\sum_{m=0}^{\infty}\sum_{k=0}^{2^{m}-1}\theta_{m,k}b_{m,k}(t),\qquad t\in[0,1],
\end{equation}
where \begin{equation}\label{Faber--Schauder coefficients}
	\theta_{n,k}:=x(t^{n+1}_{2k+1})-\frac{t^{n+1}_{2k+2}-t^{n+1}_{2k+1}}{t^{n+1}_{2k+2}-t^{n+1}_{2k}}x(t^{n+1}_{2k})-\frac{t^{n+1}_{2k+1}-t^{n+1}_{2k}}{t^{n+1}_{2k+2}-t^{n+1}_{2k}}x(t^{n+1}_{2k+2})
\end{equation} 
will be called the \emph{generalized Faber--Schauder coefficients of $x$.} For $p > 0$ and $n \in \bN$, we define the $p^\text{th}$ variation of the function $x$ along  $\bT_n$ by
\begin{equation*}
	\<x\>_n^{(p)}:= \sum_{k = 0}^{2^n-1}\big|x(t^n_{k+1})-x(t^n_{k})\big|^p. 
\end{equation*} If there exists $q \in [1,\infty]$ such that 
\begin{equation*}
	\lim\limits_{n \ua \infty} \<x\>_n^{(p)} = \begin{cases}
		0 &\quad p > q,\\
		\infty &\quad p < q,
	\end{cases}
\end{equation*}
the \emph{roughness exponent} of $x$ (with respect to $(\bT_n)_{n \in \bN_0}$) is  defined as $R = 1/q$. The following definition formulates the \text{reverse Jensen condition} for the irregular Faber--Schauder coefficients defined in \eqref{Faber--Schauder coefficients}. 

\begin{definition}\label{def converse Jensen condition}
	We say that the coefficients $(\theta_{m,k})$ of $x$ satisfy the \textit{reverse Jensen condition} if for each $p \ge 1$, there exists a non-decreasing subexponential function $\varrho_p: \bN \rightarrow [1,\infty)$ such that 
	\begin{equation*}
		\begin{split}
			\frac{1}{\varrho_p(n)}\Big(\sum_{m = 0}^{n-1}\sum_{k = 0}^{2^{m-1}-1}&\frac{\theta_{m,k}^2}{t^m_{2k+1}-t^m_{2k}}+\frac{\theta_{m,k}^2}{t^m_{2k+2}-t^m_{2k+1}}\Big)^{p/2} \le \Big(\sum_{k = 0}^{2^n-1}\sum_{m = 0}^{n-1}\frac{\theta^2_{m,\floor{2^{m-n+1}k} }}{t^{m}_{\floor{2^{m-n}k}+1}-t^{m}_{\floor{2^{m-n}k}}}\Big)^{p/2} \\&\le \varrho_p(n)\Big(\sum_{m = 0}^{n-1}\sum_{k = 0}^{2^{m-1}-1}\frac{\theta_{m,k}^2}{t^m_{2k+1}-t^m_{2k}}+\frac{\theta_{m,k}^2}{t^m_{2k+2}-t^m_{2k+1}}\Big)^{p/2} \quad \text{for } n \in \bN. 
		\end{split}
	\end{equation*}
\end{definition}

To formulate our extension of \Cref{Thm_main}  to unequally spaced partitions, we need an additional condition on our partitions sequence. Let us denote
\begin{equation*}
	\overline{\pi}_n := \sup_{k}|t^n_{k+1} - t^n_k| \quad \text{and} \quad \underline{\pi}_n := \inf_{k}|t^n_{k+1} - t^n_k|.
\end{equation*}
We will say that the sequence $(\bT_n)_{n\in\bN_0}$ is \textit{well-balanced} if there exists a non-decreasing subexponential function $\kappa$ such that 
\begin{equation*}
	\sup_{k \le n} \frac{\overline{\pi}_k}{\underline{\pi}_k} \le \kappa(n),\qquad n\in\bN_0.
\end{equation*}
The  similar, but slightly different concept of a \textit{balanced} partition sequence was introduced earlier by Cont and Das~\cite{ContDas,ContDas21}. 
Furthermore, for $n \in \bN$, we now denote likewise
\begin{equation*}
	s_n := \sqrt{\sum_{m = 0}^{n-1}\sum_{k = 0}^{2^{m}-1}2^{m}\theta^2_{m,k}} \quad \text{and} \quad \wh R_n(x) = 1 - \frac{1}{n}\log_2 s_n.
\end{equation*}
The next theorem shows that the roughness exponent defined over a sequence of well-balanced partitions, its value can be obtained from the sequence $(s_n)_{n \in \bN}$.

\begin{theorem}\label{thm balance}
	Suppose that the generalized Faber--Schauder coefficients of $x$ satisfy the reverse Jensen condition and the partition sequence $(\bT_n)_{n \in \bN_0}$ is well-balanced. Then the function $x$ admits the roughness exponent $R$ if and only if the finite limit $r:= \lim_n \wh R_n(x)$ exists, and in that case we have  $R = r$. 
\end{theorem}

Before proving \Cref{thm balance}, let us show that the main result of dyadic partitions can be directly recovered from \Cref{thm balance}. Suppose $(\bT_n)_{n \in \bN_0}$  is the sequence of dyadic partitions and the function $x \in C[0,1]$ admits the Faber--Schauder coefficients $(\mu_{m,k})$. Then it follows that
\begin{equation*}
	\mu_{m,k} = 2^{m/2+1}\theta_{m,k} \quad \text{and} \quad s_n = \sqrt{\frac{1}{2}\sum_{m = 0}^{n-1}\sum_{k = 0}^{2^m-1}\mu_{m,k}^2},
\end{equation*}
which then reproduces Theorem \ref{Thm_main}. To prepare for the proof of \Cref{thm balance},  we  define for $n\in\bN$, 
\begin{equation}\label{Rademacher fct}
	V_n :=\sum_{k=0}^{2^{n}-1}\bigg(\frac1{t^{n+1}_{2k+1}-t^{n+1}_{2k}}\Ind{[t^{n+1}_{2k},t^{n+1}_{2k+1})} -\frac1{t^{n+1}_{2k+2}-t^{n+1}_{2k+1}}\Ind{[t^{n+1}_{2k+1},t^{n+1}_{2k+2})}\bigg).
\end{equation}
Then $V_n$ is equal to the right-hand derivative of the function 
$\sum_{k=0}^{2^{n}-1}b_{n,k}$.
We consider each $V_n$ as a random variable on the probability space $([0,1],\cF,\bP)$, where $\cF$ is the Borel $\sigma$-field of $[0,1]$ and $\bP$ is the Lebesgue measure on  $\cF$. 
When defining the $\sigma$-field
$$\cF_n:=\sigma\big([t^n_k,t^n_{k+1}):k=0,\dots, 2^n-1\big),
$$
one sees that $V_n$ is $\cF_{n+1}$-measurable. Let us moreover define random variables $\vartheta_n:[0,1]\to\bR$ by
\begin{equation}\label{theta fct}
	\vartheta_n:=\sum_{k=0}^{2^{n}-1}\theta_{n,k}\Ind{[t^n_{k},t^n_{k+1})}.
\end{equation}
Clearly, the random variable $\vartheta_n$ is $\cF_n$--measurable. For each $n \in \bN$, let $x_n$ denote the $n^\text{th}$ truncation of $x$, defined in analogy to  \eqref{truncation eq}. Then
	$$x_{n}(t)=\int_0^t\sum_{m=0}^{n-1}\vartheta_m(s)V_m(s)\,ds.
	$$
	Since $\vartheta_m$ and $V_m$ are constant on intervals of the form $[t^{n}_k,t^{n}_{k+1})$ for $n \ge m$, we have 
	\begin{equation}\label{eq irr sum}
		x(t^n_{k+1})-x(t^n_k)=x_{n}(t^n_{k+1})-x_{n}(t^n_k)=(t^n_{k+1}-t^n_{k})\sum_{m=0}^{n-1}\vartheta_m(t)V_m(t).
	\end{equation}
	Hence, for $p\ge1$, 
	\begin{align*}
		\<x\>_n^{(p)}&=\sum_{k=0}^{2^n-1}\big|x(t^n_{k+1})-x(t^n_k)\big|^p=\sum_{k=0}^{2^n-1}\bigg|(t^n_{k+1}-t^n_{k})\sum_{m=1}^{n}\vartheta_m(t^{n}_k)V_m(t^{n}_k)\bigg|^p\\
		&=
	\sum_{k=0}^{2^n-1}(t^n_{k+1}-t^n_{k})\cdot\bigg|\gamma_{n,p}(t_k)\sum_{m=0}^{n-1}\vartheta_m(t_k)V_m(t_k)\bigg|^p=\bE\bigg[\bigg|\gamma_{n,p}\sum_{m=0}^{n-1}\vartheta_mV_m\bigg|^p\bigg],
	\end{align*}
	where the expectation is taken under the Lebesgue measure $\bP$ and 
	\begin{equation*}
		\gamma_{n,p}(t)  :=\sum_{k=0}^{2^n-1}(t^n_{k+1}-t^n_{k})^{\frac{p-1}{p}}\Ind{[t^n_{k+1},t^n_{k})}(t).
	\end{equation*} 
	 By definition, $\gamma_{n,p}$ is constant on all intervals of the form $[t^{n}_{k},t^{n}_{k+1})$ and hence $\cF_{n}$-measurable. Moreover,  H\"{o}lder's inequality yields that
	\begin{equation}\label{eq irr Holder}
		\bE[\gamma_{n,p}^{-p}]^{-1}\cdot\bE\bigg[\bigg|\sum_{m=0}^{n-1}\vartheta_mV_m\bigg|^{p/2}\bigg]^{2}\le	\bE\bigg[\bigg|\gamma_{n,p}\sum_{m=0}^{n-1}\vartheta_mV_m\bigg|^p\bigg] \le \bE[\gamma_{n,p}^{2p}]^{1/2}\cdot\bE\bigg[\bigg|\sum_{m=0}^{n-1}\vartheta_mV_m\bigg|^{2p}\bigg]^{1/2}.
	\end{equation}
	Now take $M_0:=0$ and 
	$$M_n:=\sum_{m=0}^{n-1}\vartheta_mV_m=\sum_{m=0}^{n-1}\vartheta_m(N_{m+1}-N_m),\qquad n=1,2,\dots.
	$$
	Since for each $m$, the random variable $\vartheta_m$ is  bounded and $\cF_{m}$-measurable, $(M_n)_{n=0,1,\dots}$ is a martingale transform of the martingale $(N_n)_{n=0,1,\dots}$ and hence itself a martingale. 
	We first compute the expectations of the random variable $\gamma_{n,p}$ occurring on both bounds,
	\begin{equation*}
		\begin{split}
			\bE[\gamma_{n,p}^{2p}]^{1/2} &= \Big(\sum_{k = 0}^{2^n-1}(t^n_{k+1}-t^n_{k})^{2(p-1)}\cdot(t^n_{k+1}-t^n_{k})\Big)^{1/2} = \Big(\sum_{k = 0}^{2^n-1}(t^n_{k+1}-t^n_{k})^{2p-1}\Big)^{1/2}.\\ 
			\bE[\gamma_{n,p}^{-p}]^{-1} &= \Big(\sum_{k = 0}^{2^n-1}(t^n_{k+1}-t^n_{k})^{-(p-1)}\cdot(t^n_{k+1}-t^n_{k})\Big)^{-1} = \Big(\sum_{k = 0}^{2^n-1}(t^n_{k+1}-t^n_{k})^{2-p}\Big)^{-1}.
		\end{split}
	\end{equation*}
	Furthermore, the Burkholder inequality yields constants $0 < A_p \le B_p < \infty$, depending only on $p$ but not on $x$, such that
	$$A_p\bE\big[[M]_n^{p}\big]\le \bE\Big[\big|M_n\big|^{2p}\Big]\le B_p\bE\big[[M]_n^{p}\big],
	$$
	where 
	$$[M]_n=\sum_{m=1}^n(M_m-M_{m-1})^2=\sum_{m=0}^{n-1} \vartheta^2_mV^2_m.
	$$
	Note that we can also rephrase the reverse Jensen condition in Definition \ref{def converse Jensen condition} in the same way as in \eqref{Thm_main cond}:
	\begin{equation*}
		\frac{1}{\varrho_p(n)}\bE\big[\sum_{m = 0}^{n-1} V^2_m \vartheta^2_m\big]^{p/2} \le \bE\big[\Big(\sum_{m = 0}^{n-1} V^2_m \vartheta^2_m\Big)^{p/2}\big] \le \varrho_p(n)\bE\big[\sum_{m = 0}^{n-1} V^2_m \vartheta^2_m\big]^{p/2}.
	\end{equation*}
	It then follows from the reverse Jensen condition that
	$$ \frac{1}{\varrho_p(n)}\bE\big[[M]_n\big]^{p} \le \bE\big[[M]_n^{p}\big] \le \varrho_p(n)\bE\big[[M]_n\big]^{p}.
	$$
	Now, we get 
	\begin{equation*}
		\bE\big[[M]_n\big] =	\bE\big[\sum_{m = 0}^{n-1}\vartheta^2_mV^2_m\big] = \sum_{m = 0}^{n-1}\sum_{k = 0}^{2^{m}-1}\frac{\theta_{m,k}^2}{t^{m+1}_{2k+1}-t^{m+1}_{2k}}+\frac{\theta_{m,k}^2}{t^{m+1}_{2k+2}-t^{m+1}_{2k+1}}
	\end{equation*}
	Combining the previous inequalities and the above equality yields
	\begin{equation*}
		\<x\>_n^{(p)} \le C_p\varrho_p(n)\Big(\sum_{m = 0}^{n-1}\sum_{k = 0}^{2^{m}-1}\frac{\theta_{m,k}^2}{t^{m+1}_{2k+1}-t^{m+1}_{2k}}+\frac{\theta_{m,k}^2}{t^{m+1}_{2k+2}-t^{m+1}_{2k+1}}\Big)^{p/2}\cdot\Big(\sum_{k = 0}^{2^n-1}(t^n_{k+1}-t^n_{k})^{2p-1}\Big)^{1/2}.
	\end{equation*}
	By deriving a corresponding lower bound in the same way, we obtain the following proposition. Note that have not yet used the assumption that our partition sequence is well-balanced.

\begin{proposition}\label{thm main 2}
	Let $x \in C[0,1]$ satisfy the reverse Jensen condition. Then for $p \ge 1$, there exist constants $0 < A_p \le B_p < \infty$ and subexponential function $\varrho_p$ depending only on $p$ but not on $x$, such that for all $n \in \bN$, 
	\begin{equation*}
		\begin{split}
			\<x\>_n^{(p)}&\le B_p\varrho_p(n)\Big(\sum_{m = 0}^{n-1}\sum_{k = 0}^{2^{m}-1}\frac{\theta_{m,k}^2}{t^{m+1}_{2k+1}-t^{m+1}_{2k}}+\frac{\theta_{m,k}^2}{t^{m+1}_{2k+2}-t^{m+1}_{2k+1}}\Big)^{p/2}\cdot\Big(\sum_{k = 0}^{2^n-1}(t^n_{k+1}-t^n_{k})^{2p-1}\Big)^{1/2},\\	
			\<x\>_n^{(p)}&\ge \frac{A_p}{\varrho_p(n)}\Big(\sum_{m = 0}^{n-1}\sum_{k = 0}^{2^{m}-1}\frac{\theta_{m,k}^2}{t^{m+1}_{2k+1}-t^{m+1}_{2k}}+\frac{\theta_{m,k}^2}{t^{m+1}_{2k+2}-t^{m+1}_{2k+1}}\Big)^{p/2}\cdot\Big(\sum_{k = 0}^{2^n-1}(t^n_{k+1}-t^n_{k})^{2-p}\Big)^{-1}.
		\end{split}
	\end{equation*}
\end{proposition}

\begin{proof}[Proof of \Cref{thm balance}]
	Since $(\bT_n)_{n \in \bN}$ is a sequence of well-balanced partitions, then $\kappa^{-1}(n)2^{-n} \le t^n_{k+1} - t^n_k \le \kappa(n)2^{-n}$ for $n \in \bN$. Applying this relation to  \Cref{thm main 2} gives
	\begin{equation*}
		\begin{split}
			\<x\>_n^{(p)} &\le B_p\varrho_p(n)\Big(\sum_{m = 0}^{n-1}\sum_{k = 0}^{2^{m}-1}\frac{\theta_{m,k}^2}{t^{m+1}_{2k+1}-t^{m+1}_{2k}}+\frac{\theta_{m,k}^2}{t^{m+1}_{2k+2}-t^{m+1}_{2k+1}}\Big)^{p/2}\cdot\Big(\sum_{k = 0}^{2^n-1}(t^n_{k+1}-t^n_{k})^{2p-1}\Big)^{1/2} \\&\le B_p\varrho_p(n)\Big(\sum_{m = 0}^{n-1}\sum_{k = 0}^{2^{m}-1}2^{m}\kappa(m)\theta_{m,k}^2\Big)^{p/2}\cdot\Big(\sum_{k = 0}^{2^n-1}(2^{-n})^{2p-1}\kappa(n)^{|2p-1|}\Big)^{1/2}\\&\le B_p\varrho_p(n)\kappa(n)^{1+|2p-1|/2}2^{n(1-p)}\Big(\sum_{m = 0}^{n-1}\sum_{k = 0}^{2^{m}-1}2^{m}\theta_{m,k}^2\Big)^{p/2} = B_p\varrho_p(n)\kappa(n)^{1+|2p-1|/2}2^{n(1-p)}s_n^p.
		\end{split}
	\end{equation*}
	Similarly, for the lower bound
	\begin{equation*}
		\<x\>_n^{(p)} \ge \frac{A_p}{\varrho_p(n)\kappa(n)^{p/2+|p-2|}}2^{n(1-p)}s_n^p.
	\end{equation*}
	Taking logarithm on both sides of the above two inequalities and letting $n \ua \infty$ give the result. This completes the proof.
\end{proof}

\noindent{\bf Acknowledgement.} The authors express their gratitude to two anonymous referees for their valuable comments. We thank Zhenyuan Zhang for many enlightening discussions.

	\bibliographystyle{plain}
	\bibliography{CTBook}
	
	\end{document}